\numberwithin{equation}{section}
\def\Rii{$\text{R}_{\text{II}}$\ }
\def\diag{\mathop{\mathrm{diag}}}
\newtheorem{theorem}{Theorem}[section]
\theoremstyle{definition}
\newtheorem{example}{Example}
\title{A generalized eigenvalue algorithm for tridiagonal matrix pencils based on a nonautonomous discrete integrable system}
\author[amp]{Kazuki Maeda\corref{cor}}
\ead{kmaeda@amp.i.kyoto-u.ac.jp}
\author[amp]{Satoshi Tsujimoto}
\ead{tujimoto@i.kyoto-u.ac.jp}
\address[amp]{Department of Applied Mathematics and Physics, Graduate School of Informatics, Kyoto University, Kyoto 606-8501, Japan}
\begin{document}
\begin{abstract}
  A generalized eigenvalue algorithm for tridiagonal matrix pencils is presented.
  The algorithm appears as the time evolution equation of
  a nonautonomous discrete integrable system
  associated with a polynomial sequence which has some orthogonality
  on the support set of the zeros of the characteristic polynomial
  for a tridiagonal matrix pencil.
  The convergence of the algorithm is discussed by using the solution to
  the initial value problem for the corresponding discrete integrable system.
\end{abstract}
\begin{keyword}
  generalized eigenvalue problem \sep
  nonautonomous discrete integrable system \sep
  \Rii chain \sep
  dqds algorithm \sep
  orthogonal polynomials

  \MSC[2010] 37K10 \sep 37K40 \sep 42C05 \sep 65F15
\end{keyword}

\maketitle

\section{Introduction}
Applications of discrete integrable systems to numerical algorithms are
important and fascinating topics.
Since the end of the twentieth century,
a number of relationships between classical numerical algorithms and
integrable systems have been studied
(see the review papers \cite{brezinski2000cad,nakamura2004nan,chu2008laa}).
On this basis, new algorithms based on discrete integrable systems
have been developed:
(i) singular value algorithms for bidiagonal matrices based on the discrete
Lotka--Volterra equation \cite{tsujimoto2001dlv,iwasaki2006acs},
(ii) Pad\'e approximation algorithms based on the discrete relativistic Toda
lattice \cite{minesaki2001drt} and the discrete Schur flow \cite{mukaihira2002sfo},
(iii) eigenvalue algorithms for band matrices based on the discrete hungry Lotka--Volterra
equation \cite{fukuda2009tdh} and the nonautonomous discrete hungry Toda lattice
\cite{fukuda2012slt}, and
(iv) algorithms for computing D-optimal designs based on the nonautonomous
discrete Toda (nd-Toda) lattice \cite{sekido2012acd} and the discrete modified
KdV equation \cite{sekido2012acdt}.

In this paper, we focus on a nonautonomous discrete integrable system
called the \emph{\Rii chain} \cite{spiridonov2000stc}, which is associated
with the generalized eigenvalue problem for tridiagonal matrix pencils
\cite{zhedanov1999brf}.
The relationship between the finite \Rii chain and the generalized eigenvalue
problem can be understood to be an analogue of the connection between
the \emph{finite nd-Toda lattice}
and the eigenvalue problem for tridiagonal matrices.
In numerical analysis, the time evolution equation of the finite nd-Toda lattice
is called the
\emph{dqds (differential quotient difference with shifts) algorithm}
\cite{fernando1994asv},
which is well known as a fast and accurate iterative algorithm for computing
eigenvalues or singular values.
Therefore, it is worth to consider the application of the finite \Rii chain
to algorithms for computing generalized eigenvalues.
The purpose of this paper is to construct a generalized eigenvalue
algorithm based on the finite \Rii chain and to prove the convergence of
the algorithm.
Further improvements and comparisons with traditional methods will be
studied in subsequent papers.

The nd-Toda lattice on a semi-infinite lattice or a non-periodic finite lattice
has a \emph{Hankel determinant solution}.
In the background, there are \emph{monic orthogonal polynomials},
which give rise to this solution;
monic orthogonal polynomials have a determinant expression that relates to the Hankel determinant,
and spectral transformations for monic orthogonal polynomials give
the Lax pair of the nd-Toda lattice
\cite{papageorgiou1995opa,spiridonov1995ddt}.
Especially, for the finite lattice case, we can easily solve the
initial value problem for the nd-Toda lattice with the Gauss quadrature
formula for \emph{monic finite orthogonal polynomials}.
This special property of the discrete integrable system allows us to analyze
the behaviour of the system in detail and tells us how parameters should
be chosen to accelerate the convergence of the dqds algorithm.
We will give a review of this theory in Section~\ref{sec:orth-polyn-nd}.

The theory above will be extended to the \Rii chain in
Section~\ref{sec:rii-polynomials-rii}.
The three-term recurrence relation that monic orthogonal polynomials satisfy
arises from a tridiagonal matrix.
In a similar way, a tridiagonal matrix pencil defines a monic polynomial sequence.
This polynomial sequence, called monic \emph{\Rii polynomials}
\cite{ismail1995goa},
possesses similar properties to monic orthogonal polynomials and their spectral transformations
yield the monic type \Rii chain.
A determinant expression of the monic \Rii polynomials
gives a Hankel determinant solution and, in particular for the finite lattice
case, a convergence theorem of the monic \Rii chain is shown under an assumption.
This theorem enables us to design a generalized eigenvalue algorithm.

The dqds algorithm is a subtraction-free algorithm,
i.e., the recurrence equations of the dqds algorithm do not contain subtraction
operations except origin shifts (see Subsection~\ref{sec:dqds-algorithm}).
The subtraction-free form is numerically effective to avoid the loss of significant digits.
In addition, there is another application of the subtraction-free form:
ultradiscretization \cite{tokihiro1996fse} or tropicalization \cite{itenberg2009tag};
e.g., the ultradiscretization
of the finite nd-Toda lattice in a subtraction-free form
gives a time evolution equation of the box--ball system with a carrier
\cite{maeda2010bbs}.
In Section~\ref{sec:gener-eigenv-algor},
for the monic type \Rii chain, we will present its subtraction-free form,
which contains no subtractions except origin shifts under some conditions.
It is considered that this form makes the computation of the proposed algorithm
more accurate.
At the end of the paper, numerical examples will be presented to confirm that
the proposed algorithm computes the generalized eigenvalues of
given tridiagonal matrix pencils fast and accurately.

\section{Monic orthogonal polynomials, nd-Toda lattice, and dqds algorithm}
\label{sec:orth-polyn-nd}
First, we will review the connection between
the theory of orthogonal polynomials and the nd-Toda lattice.

\subsection{Infinite dimensional case}
\label{sec:infin-dimens-case}

Let us consider a tridiagonal semi-infinite matrix of the form
\begin{equation*}
  B^{(t)}=
  \begin{pmatrix}
    u^{(t)}_0 & 1\\
    w^{(t)}_1 & u^{(t)}_1 & 1\\
    & w^{(t)}_2 & u^{(t)}_2 & 1\\
    && w^{(t)}_3 & \ddots & \ddots\\
    &&& \ddots & \ddots
  \end{pmatrix},\quad
  u^{(t)}_n \in \mathbb C,\quad w^{(t)}_n \in \mathbb C-\{0\},
\end{equation*}
where $t \in \mathbb N$ is the discrete time, whose evolution will be introduced
later.
Let $I_n$ denote the identity matrix of order $n$ and
$B^{(t)}_n$ the $n$-th order leading principal submatrix of $B^{(t)}$.
We now introduce a polynomial sequence $\{\phi^{(t)}_n(x)\}_{n=0}^\infty$:
\begin{equation*}
  \phi^{(t)}_0(x)\coloneq 1,\quad
  \phi^{(t)}_{n}(x)\coloneq \det(xI_n-B^{(t)}_n),\quad n=1, 2, 3, \dots.
\end{equation*}
By definition, $\phi^{(t)}_n(x)$ is a monic polynomial of degree $n$.
The Laplace expansion for $\det(xI_{n+1}-B^{(t)}_{n+1})$ with respect to the last row
yields the three-term recurrence relation
\begin{equation}\label{eq:op-trr}
  \phi^{(t)}_{n+1}(x)=(x-u^{(t)}_n)\phi^{(t)}_n(x)-w^{(t)}_n \phi^{(t)}_{n-1}(x),\quad
  n=0, 1, 2, \dots,
\end{equation}
where we set $w^{(t)}_0\coloneq 0$ and $\phi^{(t)}_{-1}(x)\coloneq 0$.
It is well known that the three-term recurrence relation of the form
\eqref{eq:op-trr} gives the following classical theorem.

\begin{theorem}[{Favard's Theorem \cite[Chapter~I, Section~4]{chihara1978iop}}]
  For the polynomials $\{\phi^{(t)}_n(x)\}_{n=0}^\infty$ satisfying
  the three-term recurrence relation \eqref{eq:op-trr} and any nonzero constant
  $h^{(t)}_0$,
  there exists
  a unique linear functional $\mathcal L^{(t)}$ defined on the space of
  all polynomials such that the orthogonality relation
  \begin{equation}\label{eq:op-orthogonality}
    \mathcal L^{(t)}[x^m \phi^{(t)}_n(x)]=h^{(t)}_n \delta_{m, n},\quad
    n=0, 1, 2, \dots, \quad
    m=0, 1, \dots, n,
  \end{equation}
  holds, where
  \begin{equation*}
    h^{(t)}_n=h^{(t)}_0 w^{(t)}_1 w^{(t)}_2 \dots w^{(t)}_n,\quad
    n=1, 2, 3, \dots,
  \end{equation*}
  and $\delta_{m, n}$ is Kronecker delta.
\end{theorem}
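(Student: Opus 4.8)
The plan is to exploit the fact that the monic polynomials $\{\phi^{(t)}_n(x)\}_{n=0}^\infty$, having exactly one polynomial of each degree, form a basis of the space of all polynomials; throughout I fix $t$ and suppress the superscript. A linear functional is completely determined by the values it assigns to a basis, which immediately yields uniqueness: if $\mathcal L$ satisfies \eqref{eq:op-orthogonality}, then setting $m=0$ forces $\mathcal L[\phi_n]=h_n\delta_{0,n}$, that is, $\mathcal L[\phi_0]=h_0$ and $\mathcal L[\phi_n]=0$ for $n\ge 1$. These are its values on the entire basis, so no second functional can satisfy even this subset of the conditions.

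For existence I would turn this observation around and \emph{define} $\mathcal L$ on the basis by $\mathcal L[\phi_0]\coloneq h_0$ and $\mathcal L[\phi_n]\coloneq 0$ for $n\ge 1$, extending linearly to all polynomials. It then remains to verify that this single functional satisfies the full family of relations $\mathcal L[x^m\phi_n]=h_n\delta_{m,n}$ for all $0\le m\le n$, not merely the cases $m=0$ used to pin it down.

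I would carry this out by induction on $m$. The base case $m=0$ holds by construction. For the inductive step I rewrite the recurrence \eqref{eq:op-trr} as $x\phi_n=\phi_{n+1}+u_n\phi_n+w_n\phi_{n-1}$, multiply by $x^m$, and apply $\mathcal L$ to obtain
\begin{equation*}
  \mathcal L[x^{m+1}\phi_n]
  =\mathcal L[x^m\phi_{n+1}]+u_n\mathcal L[x^m\phi_n]+w_n\mathcal L[x^m\phi_{n-1}].
\end{equation*}
Under the constraint $m+1\le n$, each term on the right-hand side lies in the range $m\le n'$ covered by the induction hypothesis, so it evaluates to $h_{n+1}\delta_{m,n+1}+u_nh_n\delta_{m,n}+w_nh_{n-1}\delta_{m,n-1}$. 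When $m+1<n$ all three Kronecker deltas vanish and the result is $0$, as required; when $m+1=n$ only the last term survives, giving $w_nh_{n-1}$.

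The one point that genuinely uses the explicit form of the constants is precisely this last case: the relation closes consistently if and only if $w_nh_{n-1}=h_n$, which is exactly the recurrence defining $h_n=h_0w_1w_2\cdots w_n$. Thus the stated product formula for $h_n$ is not an extra assumption but is forced by the requirement that the induction go through. I expect the only mild subtlety to be the bookkeeping of the index range $m\le n'$, so that every functional invoked on the right-hand side is genuinely covered by the orthogonality relation (which is asserted only for $m\le n$), rather than any deeper difficulty; the heart of the argument is the one-line consistency check $w_nh_{n-1}=h_n$.
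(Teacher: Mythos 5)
Your proof is correct. The paper itself does not prove this statement --- it is quoted as a classical result with a citation to Chihara's book --- but your argument (define $\mathcal L$ on the basis $\{\phi^{(t)}_n\}$ by $\mathcal L^{(t)}[\phi^{(t)}_0]=h^{(t)}_0$, $\mathcal L^{(t)}[\phi^{(t)}_n]=0$ for $n\ge 1$, then induct on $m$ using the recurrence $x\phi^{(t)}_n=\phi^{(t)}_{n+1}+u^{(t)}_n\phi^{(t)}_n+w^{(t)}_n\phi^{(t)}_{n-1}$, with the consistency check $h^{(t)}_n=w^{(t)}_nh^{(t)}_{n-1}$ forcing the product formula) is exactly the standard proof given in that reference, and your handling of the index range $m\le n'$ in the induction is sound.
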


From the relation \eqref{eq:op-orthogonality}, we readily obtain the relation
\begin{equation*}
  \mathcal L^{(t)}[\phi^{(t)}_m(x)\phi^{(t)}_n(x)]=h^{(t)}_n \delta_{m, n},\quad
  m, n=0, 1, 2, \dots.
\end{equation*}
Therefore, the polynomials $\{\phi^{(t)}_n(x)\}_{n=0}^\infty$ are the monic orthogonal polynomials
with respect to $\mathcal L^{(t)}$.

Let us define the \emph{moment} of order $m$ by
\begin{equation*}
  \mu^{(t)}_m \coloneq \mathcal L^{(t)}[x^m],\quad
  m=0, 1, 2, \dots,
\end{equation*}
and its \emph{Hankel determinant} of order $n$ by
\begin{equation*}
  \tau^{(t)}_0\coloneq 1,\quad
  \tau^{(t)}_n\coloneq |\mu^{(t)}_{i+j}|_{i, j=0}^{n-1}=
  \begin{vmatrix}
    \mu^{(t)}_0 & \mu^{(t)}_1 & \dots & \mu^{(t)}_{n-1}\\
    \mu^{(t)}_1 & \mu^{(t)}_2 & \dots & \mu^{(t)}_{n}\\
    \vdots & \vdots & & \vdots \\
    \mu^{(t)}_{n-1} & \mu^{(t)}_n & \dots & \mu^{(t)}_{2n-2}
  \end{vmatrix},\quad
  n=1, 2, 3, \dots.
\end{equation*}
Since the monic orthogonal polynomials with respect to $\mathcal L^{(t)}$
are uniquely determined, we then find the determinant expression of
the polynomial $\phi^{(t)}_n(x)$:
\begin{equation}\label{eq:op-determinant}
  \phi^{(t)}_n(x)=\frac{1}{\tau^{(t)}_n}
  \begin{vmatrix}
    \mu^{(t)}_0 & \mu^{(t)}_1 & \dots & \mu^{(t)}_{n-1} & \mu^{(t)}_n\\
    \mu^{(t)}_1 & \mu^{(t)}_2 & \dots & \mu^{(t)}_{n} & \mu^{(t)}_{n+1}\\
    \vdots & \vdots & & \vdots & \vdots\\
    \mu^{(t)}_{n-1} & \mu^{(t)}_n & \dots & \mu^{(t)}_{2n-2} & \mu^{(t)}_{2n-1}\\
    1 & x & \dots & x^{n-1} & x^n
  \end{vmatrix},\quad n=0, 1, 2, \dots.
\end{equation}

Next, we introduce the discrete time evolution into the monic orthogonal
polynomials by the following transformation
from $\{\phi^{(t)}_n(x)\}_{n=0}^\infty$ to $\{\phi^{(t+1)}_n(x)\}_{n=0}^\infty$:
\begin{equation}\label{eq:op-christoffel}
  (x-s^{(t)})\phi^{(t+1)}_n(x)=\phi^{(t)}_{n+1}(x)+q^{(t)}_n \phi^{(t)}_{n}(x),\quad
  n=0, 1, 2, \dots,
\end{equation}
where
\begin{equation}\label{eq:op-q}
  q^{(t)}_n \coloneq -\frac{\phi^{(t)}_{n+1}(s^{(t)})}{\phi^{(t)}_n(s^{(t)})},\quad
  n=0, 1, 2, \dots,
\end{equation}
and $s^{(t)}$ is a parameter that is not a zero of $\phi^{(t)}_n(x)$
for all $n=0, 1, 2, \dots$.
Suppose that $\{\phi^{(t)}_n(x)\}_{n=0}^\infty$ are the monic orthogonal polynomials with respect to
$\mathcal L^{(t)}$ and define a new linear functional $\mathcal L^{(t+1)}$ by
\begin{equation}\label{eq:op-lf-time-evolution}
  \mathcal L^{(t+1)}[P(x)]\coloneq \mathcal L^{(t)}[(x-s^{(t)}) P(x)]
\end{equation}
for all polynomials $P(x)$.
Then, it is easily verified that $\{\phi^{(t+1)}_n(x)\}_{n=0}^\infty$ are
monic orthogonal polynomials with respect to $\mathcal L^{(t+1)}$ again.
Since it is shown that the monic orthogonal polynomials satisfy the three-term recurrence relation
of the form \eqref{eq:op-trr}, another relation
\begin{equation}\label{eq:op-geronimus}
  \phi^{(t)}_n(x)=\phi^{(t+1)}_n(x)+e^{(t)}_n \phi^{(t+1)}_{n-1}(x),\quad
  n=0, 1, 2, \dots,
\end{equation}
is derived for consistency.
The variable $e^{(t)}_n$ satisfies the compatibility condition
\begin{subequations}\label{eq:nd-Toda-semi-infinite}
  \begin{gather}
    u^{(t+1)}_n=q^{(t+1)}_n+e^{(t+1)}_n+s^{(t+1)}=q^{(t)}_n+e^{(t)}_{n+1}+s^{(t)},\label{eq:nd-Toda-eq1}\\
    w^{(t+1)}_n=q^{(t+1)}_{n-1}e^{(t+1)}_n=q^{(t)}_n e^{(t)}_n\label{eq:nd-Toda-eq2}
  \end{gather}
  with the boundary condition
  \begin{equation}\label{eq:nd-Toda-infinite-bc}
    e^{(t)}_0=0 \quad \text{for all $t \ge 0$}.
  \end{equation}
\end{subequations}
The transformations \eqref{eq:op-christoffel} and \eqref{eq:op-geronimus}
are called
the \emph{Christoffel transformation} and the \emph{Geronimus transformation},
respectively \cite{zhedanov1997rst}.
The discrete dynamical system \eqref{eq:nd-Toda-semi-infinite} is
the semi-infinite nd-Toda lattice.

We have seen above the derivation of the nd-Toda lattice from the theory of orthogonal polynomials.
Using this connection, we can give an explicit solution to the semi-infinite
nd-Toda lattice \eqref{eq:nd-Toda-semi-infinite};
the solution is written in terms of the moments of the
monic orthogonal polynomials.
The time evolution of the linear functional \eqref{eq:op-lf-time-evolution}
leads to
\begin{equation}\label{eq:op-dispersion}
  \mu^{(t+1)}_m=\mu^{(t)}_{m+1}-s^{(t)}\mu^{(t)}_m.
\end{equation}
By applying this relation to the determinant expression of
the monic orthogonal polynomials
\eqref{eq:op-determinant},
the definition of the variable \eqref{eq:op-q} yields
\begin{equation}\label{eq:nd-Toda-sol-q}
  q^{(t)}_n=\frac{\tau^{(t)}_n \tau^{(t+1)}_{n+1}}{\tau^{(t)}_{n+1}\tau^{(t+1)}_n}.
\end{equation}
Further, applying the orthogonality relation \eqref{eq:op-orthogonality} to
equation \eqref{eq:op-geronimus}, we obtain
\begin{equation}\label{eq:nd-Toda-sol-e}
  e^{(t)}_n
  =\frac{\mathcal L^{(t)}[x^n \phi^{(t)}_n(x)]}{\mathcal L^{(t+1)}[x^{n-1}\phi^{(t+1)}_{n-1}(x)]}
  =\frac{h^{(t)}_n}{h^{(t+1)}_{n-1}}
  =\frac{\tau^{(t)}_{n+1}\tau^{(t+1)}_{n-1}}{\tau^{(t)}_n\tau^{(t+1)}_n}.
\end{equation}
If the moments $\mu^{(t)}_m$, the elements of the Hankel determinant
$\tau^{(t)}_n$, are arbitrary functions satisfying the relation
\eqref{eq:op-dispersion}, then
these \eqref{eq:nd-Toda-sol-q} and \eqref{eq:nd-Toda-sol-e} give
particular solutions to the semi-infinite nd-Toda lattice
\eqref{eq:nd-Toda-semi-infinite}.
For instance,
\begin{equation*}
  \mu^{(t)}_m=\int_\Omega x^m \prod_{j=0}^{t-1}(x-s^{(j)}) \omega(x) \, \mathrm{d} x
\end{equation*}
satisfies the relation \eqref{eq:op-dispersion},
where $\Omega$ is an interval of the real line and $\omega(x)$ is
a weight function on $\Omega$.
If the integral of the right-hand side has a finite value for all $m \in \mathbb N$,
then this moment gives a solution.

\subsection{Finite dimensional case}\label{sec:op-finite-dimens-case}

In what follows, we shall reduce the size of the tridiagonal matrix $B^{(t)}$
to finite $N$:
\begin{equation*}
  B^{(t)}\coloneq
  \begin{pmatrix}
    u^{(t)}_0 & 1\\
    w^{(t)}_1 & u^{(t)}_1 & 1\\
    & w^{(t)}_2 & \ddots & \ddots\\
    && \ddots & \ddots & 1\\
    &&& w^{(t)}_{N-1} & u^{(t)}_{N-1}
  \end{pmatrix}.
\end{equation*}
The matrix $B^{(t)}$ determines a system of
\emph{monic finite orthogonal polynomials}.
The corresponding nd-Toda lattice is also reduced to the case of
the non-periodic finite lattice of size $N$:
\begin{subequations}\label{eq:nd-Toda-finite}
  \begin{gather}
    q^{(t+1)}_n+e^{(t+1)}_n+s^{(t+1)}=q^{(t)}_n+e^{(t)}_{n+1}+s^{(t)},\\
    q^{(t+1)}_{n-1} e^{(t+1)}_n=q^{(t)}_{n} e^{(t)}_n,\\
    e^{(t)}_0=e^{(t)}_N=0 \quad \text{for all $t \ge 0$}.
  \end{gather}
\end{subequations}
We can solve the initial value problem for the finite nd-Toda lattice
\eqref{eq:nd-Toda-finite} through the theory of
finite orthogonal polynomials.

The monic finite orthogonal polynomials $\{\phi^{(t)}_n(x)\}_{n=0}^N$ are
defined in the same way as the infinite dimensional case:
$\phi^{(t)}_n(x)\coloneq \det(xI_n-B^{(t)}_n)$.
It should be remarked that $\phi^{(t)}_N(x)$ is
the characteristic polynomial of $B^{(t)}$.
For the polynomials $\{\phi^{(t)}_n(x)\}_{n=0}^N$ and any nonzero constant
$h^{(t)}_0$, there exists a unique linear functional $\mathcal L^{(t)}$
such that the orthogonality relation
\begin{subequations}\label{eq:finite-op-orthogonality}
  \begin{alignat}{2}
    &\mathcal L^{(t)}[x^m \phi^{(t)}_n(x)]=h^{(t)}_n \delta_{m, n},&\quad&
    n=0, 1, \dots, N-1,\quad
    m=0, 1, \dots, n,\\
    \intertext{and the terminating condition}
    &\mathcal L^{(t)}[x^m \phi^{(t)}_N(x)]=0,&& m=0, 1, 2, \dots,\label{eq:op-finite-terminating}
  \end{alignat}
\end{subequations}
hold.

Let $x_0, x_1, \dots, x_{N-1}$ denote the zeros of
the characteristic polynomial $\phi^{(t)}_N(x)$, i.e.,
\begin{equation}\label{eq:op-N-zeros}
  \phi^{(t)}_N(x)=\prod_{i=0}^{N-1}(x-x_i).
\end{equation}
If for simplicity we assume that these zeros are all simple,
the linear functional $\mathcal L^{(t)}$ is concretely given
by the \emph{Gauss quadrature formula}.

\begin{theorem}[{Gauss quadrature formula \cite[Chapter~I, Section~6]{chihara1978iop}}]\label{thm:gqf}
  Let $x_0, x_1, \dots, x_{N-1}$ be the simple zeros of
  the characteristic polynomial $\phi^{(t)}_{N}(x)$.
  For the linear functional $\mathcal L^{(t)}$ of
  the monic finite orthogonal polynomials $\{\phi^{(t)}_n(x)\}_{n=0}^N$,
  there exist some constants $c^{(t)}_0, c^{(t)}_1, \dots, c^{(t)}_{N-1}$
  such that
  \begin{equation}\label{eq:op-gauss-quadrature-formula}
    \mathcal L^{(t)}[P(x)]=\sum_{i=0}^{N-1} c^{(t)}_i P(x_i)
  \end{equation}
  holds for all polynomials $P(x)$.
  Further, if $w^{(t)}_1, w^{(t)}_2, \dots, w^{(t)}_{N-1}$ are
  all real and positive,
  then $c^{(t)}_0, c^{(t)}_1, \dots, c^{(t)}_{N-1}$ are also all real and positive.
\end{theorem}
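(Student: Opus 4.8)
The plan is to take the simple zeros $x_0, \dots, x_{N-1}$ as the quadrature nodes and to read off the weights from how $\mathcal L^{(t)}$ acts on the associated Lagrange basis, and then to strip the higher-degree part of an arbitrary polynomial using the terminating condition. Because the zeros are simple, I introduce the Lagrange fundamental polynomials
\[
  \ell_i(x)\coloneq \prod_{\substack{j=0\\ j\ne i}}^{N-1}\frac{x-x_j}{x_i-x_j},\qquad
  i=0, 1, \dots, N-1,
\]
each of degree $N-1$ and satisfying $\ell_i(x_j)=\delta_{i, j}$, and define the candidate weights $c^{(t)}_i\coloneq \mathcal L^{(t)}[\ell_i(x)]$. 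For any polynomial $P(x)$ of degree at most $N-1$ the interpolation identity $P(x)=\sum_{i=0}^{N-1}P(x_i)\ell_i(x)$ holds, so linearity of $\mathcal L^{(t)}$ gives at once $\mathcal L^{(t)}[P(x)]=\sum_{i=0}^{N-1}c^{(t)}_i P(x_i)$.

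To extend this to an arbitrary polynomial $P(x)$, I would divide by the characteristic polynomial, writing $P(x)=Q(x)\phi^{(t)}_N(x)+R(x)$ with $\deg R\le N-1$. Extending the terminating condition \eqref{eq:op-finite-terminating} by linearity yields $\mathcal L^{(t)}[Q(x)\phi^{(t)}_N(x)]=0$, hence $\mathcal L^{(t)}[P(x)]=\mathcal L^{(t)}[R(x)]$. Since $\phi^{(t)}_N(x_i)=0$ by \eqref{eq:op-N-zeros}, we also have $R(x_i)=P(x_i)$, so the degree-$\le N-1$ case applied to $R$ gives $\mathcal L^{(t)}[P(x)]=\sum_{i=0}^{N-1}c^{(t)}_i P(x_i)$ for every polynomial $P(x)$, which is the first assertion.

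For the positivity I would test the quadrature formula against the squares $\ell_i(x)^2$. The hypothesis $w^{(t)}_n>0$, together with the normalization $h^{(t)}_0>0$, makes every $h^{(t)}_k=h^{(t)}_0 w^{(t)}_1 \cdots w^{(t)}_k$ positive, so $\mathcal L^{(t)}$ is positive-definite and, by the classical theory, the zeros $x_i$ are real and the $\ell_i$ have real coefficients. Using $\ell_i(x_j)^2=\delta_{i, j}$, the formula collapses to $c^{(t)}_i=\mathcal L^{(t)}[\ell_i(x)^2]$. Expanding the degree-$(N-1)$ polynomial in the orthogonal basis as $\ell_i(x)=\sum_{k=0}^{N-1}a_{i, k}\phi^{(t)}_k(x)$ with real $a_{i, k}$, and invoking the orthogonality in the form $\mathcal L^{(t)}[\phi^{(t)}_k(x)\phi^{(t)}_l(x)]=h^{(t)}_k\delta_{k, l}$, I obtain $c^{(t)}_i=\sum_{k=0}^{N-1}a_{i, k}^2 h^{(t)}_k$. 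Since $\ell_i\ne 0$ forces some $a_{i, k}\ne 0$ and each $h^{(t)}_k>0$, every $c^{(t)}_i$ is real and strictly positive.

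The interpolation and division identities are routine; the crux is the positivity step. The decisive move is to recognize that the weight $c^{(t)}_i$ is not merely $\mathcal L^{(t)}[\ell_i(x)]$ but in fact equals $\mathcal L^{(t)}[\ell_i(x)^2]$, which converts the sign question into the positive-definiteness of $\mathcal L^{(t)}$ on polynomials of degree below $N$. This is exactly where the sign conditions on $w^{(t)}_n$ (and on $h^{(t)}_0$) are needed, both to guarantee $h^{(t)}_k>0$ and to force the nodes $x_i$ to be real.
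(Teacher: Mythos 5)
Your proof is correct and follows essentially the same route as the paper: the paper itself only cites Chihara for this theorem, but the Lagrange-interpolation-plus-terminating-condition argument you give for \eqref{eq:op-gauss-quadrature-formula} is exactly the one the paper reproduces for its \Rii analogue in Section~\ref{sec:finite-dimens-case}, and your positivity step $c^{(t)}_i=\mathcal L^{(t)}[\ell_i(x)^2]=\sum_k a_{i,k}^2 h^{(t)}_k>0$ is the standard classical argument. The only caveat, which you correctly flag, is that the positivity conclusion also needs $h^{(t)}_0>0$ and real nodes (hence real $u^{(t)}_n$), hypotheses the paper supplies in Subsection~\ref{sec:dqds-algorithm} when it actually invokes the theorem.
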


The formula \eqref{eq:op-gauss-quadrature-formula} means that
the monic finite orthogonal polynomials
$\{\phi^{(t)}_n(x)\}_{n=0}^N$ with the terminating
condition \eqref{eq:op-finite-terminating} are orthogonal on the support set
of the zeros $x_0, x_1, \dots, x_{N-1}$ of
the characteristic polynomial $\phi^{(t)}_N(x)$.

The constants $c^{(t)}_0, c^{(t)}_1, \dots, c^{(t)}_{N-1}$ are calculated
as
\begin{equation}\label{eq:op-discrete-measure}
  c^{(t)}_i=\frac{h^{(t)}_{N-1}}{\phi^{(t)}_{N-1}(x_i){\phi'}^{(t)}_N(x_i)},\quad
  i=0, 1, \dots, N-1,
\end{equation}
where ${\phi'}^{(t)}_N(x)$ is the derivative of $\phi^{(t)}_N(x)$.
This formula is verified as follows.
Due to the Gauss quadrature formula \eqref{eq:op-gauss-quadrature-formula},
the moment is given by
\begin{equation*}
  \mu^{(t)}_m
  =\mathcal L^{(t)}[x^m]
  =\sum_{i=0}^{N-1} c^{(t)}_i x_i^m,\quad
  m=0, 1, 2, \dots.
\end{equation*}
This yields the relation
\begin{equation*}
  \mu^{(t)}_{m+1}-x_j\mu^{(t)}_m=\sum_{\substack{i=0\\ i\ne j}}^{N-1}  c^{(t)}_i(x_i-x_j)x_i^m,\quad
  j=0, 1, \dots, N-1,\quad m=0, 1, 2, \dots.
\end{equation*}
This relation and the determinant expression of
the monic orthogonal polynomials \eqref{eq:op-determinant}
lead to
\begin{equation*}
  \phi^{(t)}_{N-1}(x_j)=\frac{1}{\tau^{(t)}_{N-1}}\prod_{\substack{i=0\\i\ne j}}^{N-1} c^{(t)}_i(x_j-x_i) \prod_{\substack{0 \le \nu_0<\nu_1 \le N-1\\ \nu_0 \ne j, \nu_1 \ne j}}(x_{\nu_1}-x_{\nu_0})^2,\quad
  j=0, 1, \dots, N-1.
\end{equation*}
A similar calculation yields
\begin{equation*}
  \tau^{(t)}_N=\prod_{i=0}^{N-1} c^{(t)}_i\prod_{0 \le \nu_0<\nu_1 \le N-1}(x_{\nu_1}-x_{\nu_0})^2.
\end{equation*}
Further, we have
\begin{gather*}
  {\phi'}^{(t)}_N(x_j)=\prod_{\substack{i=0\\ i\ne j}}^{N-1} (x_j-x_i),\quad
  j=0, 1, \dots, N-1,\\
  h^{(t)}_{N-1}
  =\mathcal L^{(t)}[x^{N-1}\phi^{(t)}_{N-1}(x)]
  =\frac{\tau^{(t)}_N}{\tau^{(t)}_{N-1}}.
\end{gather*}
These equations lead to the formula \eqref{eq:op-discrete-measure}.

The spectral transformations \eqref{eq:op-christoffel} and
\eqref{eq:op-geronimus} also work for the finite dimensional case
except the Christoffel transformation for $n=N$:
\begin{equation}\label{eq:op-christoffel-N}
  \phi^{(t+1)}_N(x)=\phi^{(t)}_N(x),
\end{equation}
which is consistent with the Geronimus transformation for $n=N$.
Equation \eqref{eq:op-christoffel-N} means that
the characteristic polynomial $\phi^{(t)}_N(x)$ of the tridiagonal matrix
$B^{(t)}$ is invariant under the time evolution.
In other words, the time evolution of the monic finite orthogonal polynomials
does not change the eigenvalues of the tridiagonal matrix $B^{(t)}$.
Since the time evolution of the moment is given by \eqref{eq:op-dispersion},
we have the following expression for the moment:
\begin{equation}\label{eq:nd-Toda-finite-moment}
  \mu^{(t)}_m=\sum_{i=0}^{N-1} \left(c^{(0)}_{i} x_i^m \prod_{j=0}^{t-1} (x_i-s^{(j)})\right),
\end{equation}
where we define $t=0$ as the initial time.
Substituting this expression of the moment \eqref{eq:nd-Toda-finite-moment} into
the elements of the Hankel determinant $\tau^{(t)}_n$
and applying the Binet--Cauchy formula and the Vandermonde determinant formula,
we obtain the expanded form of $\tau^{(t)}_n$:
\begin{equation}\label{eq:nd-Toda-finite-hankel}
  \tau^{(t)}_n
  =\sum_{0 \le r_0<r_1<\dots<r_{n-1} \le N-1} \left(\prod_{i=0}^{n-1} \left(c^{(0)}_{r_i} \prod_{j=0}^{t-1} (x_{r_i}-s^{(j)})\right)\prod_{0 \le \nu_0<\nu_1 \le n-1} (x_{r_{\nu_1}}-x_{r_{\nu_0}})^2\right).
\end{equation}
Hence, we can conclude that the solution to the initial value problem for
the finite nd-Toda lattice \eqref{eq:nd-Toda-finite}
is given by
\begin{equation}\label{eq:nd-Toda-finite-qe}
  q^{(t)}_n=\frac{\tau^{(t)}_n\tau^{(t+1)}_{n+1}}{\tau^{(t)}_{n+1}\tau^{(t+1)}_{n}},\quad
  e^{(t)}_n=\frac{\tau^{(t)}_{n+1}\tau^{(t+1)}_{n-1}}{\tau^{(t)}_{n}\tau^{(t+1)}_{n}},
\end{equation}
with the expanded form of $\tau^{(t)}_n$ \eqref{eq:nd-Toda-finite-hankel} and
the expression of $c^{(0)}_n$ \eqref{eq:op-discrete-measure}.

In the rest of this subsection, we will reformulate the matrix forms of
the finite nd-Toda lattice.
Let $L^{(t)}$ and $R^{(t)}$ be bidiagonal matrices of order $N$:
\begin{equation*}
  L^{(t)}=
  \begin{pmatrix}
    1\\
    e^{(t)}_1 & 1\\
    & e^{(t)}_2 & \ddots\\
    && \ddots & \ddots\\
    &&& e^{(t)}_{N-1} & 1
  \end{pmatrix},\quad
  R^{(t)}=
  \begin{pmatrix}
    q^{(t)}_0 & 1\\
    & q^{(t)}_1 & 1\\
    && \ddots & \ddots\\
    &&& \ddots & 1\\
    &&&& q^{(t)}_{N-1}
  \end{pmatrix},
\end{equation*}
and let $\bm\phi^{(t)}(x)$ and $\bm \phi^{(t)}_N(x)$ be the vectors of order $N$:
\begin{equation*}
  \bm\phi^{(t)}(x)=
  \begin{pmatrix}
    \phi^{(t)}_0(x)\\
    \phi^{(t)}_1(x)\\
    \vdots\\
    \phi^{(t)}_{N-1}(x)
  \end{pmatrix},\quad
  \bm\phi^{(t)}_{N}(x)=
  \begin{pmatrix}
    0\\
    \vdots\\
    0\\
    \phi^{(t)}_{N}(x)
  \end{pmatrix}.
\end{equation*}
Then, the three-term recurrence relation \eqref{eq:op-trr} and
the spectral transformations \eqref{eq:op-christoffel} and
\eqref{eq:op-geronimus} are written as
\begin{subequations}\label{eq:nd-Toda-finite-lax}
  \begin{gather}
    B^{(t)}\bm \phi^{(t)}(x)+\bm\phi^{(t)}_N(x)=x\bm \phi^{(t)}(x),\label{eq:nd-Toda-finite-ev}\\
    (x-s^{(t)})\bm \phi^{(t+1)}(x)=R^{(t)}\bm \phi^{(t)}(x)+\bm\phi^{(t)}_N(x),\label{eq:op-christoffel-matrix}\\
    \bm\phi^{(t)}(x)=L^{(t)}\bm\phi^{(t+1)}(x).\label{eq:op-geronimus-matrix}
  \end{gather}
\end{subequations}
Note that, from \eqref{eq:op-N-zeros} and \eqref{eq:nd-Toda-finite-ev},
\begin{equation*}
  B^{(t)}\bm \phi^{(t)}(x_i)=x_i\bm \phi^{(t)}(x_i),\quad
  i=0, 1, \dots, N-1,
\end{equation*}
holds. This corresponds to the fact that the zeros of
the characteristic polynomials $\phi^{(t)}_N(x)$ are the eigenvalues
of $B^{(t)}$.
Moreover, it indicates that $\bm \phi^{(t)}(x_i)$ is
the eigenvector corresponding to the eigenvalue $x_i$.
From \eqref{eq:nd-Toda-finite-lax}, we have
\begin{align*}
  x\bm\phi^{(t+1)}(x)
  &=B^{(t+1)}\bm\phi^{(t+1)}(x)+\bm\phi^{(t+1)}_N(x)\\
  &=(L^{(t+1)}R^{(t+1)}+s^{(t+1)}I_N)\bm \phi^{(t+1)}(x)+\bm\phi^{(t+1)}_N(x)\\
  &=(R^{(t)}L^{(t)}+s^{(t)}I_N)\bm \phi^{(t+1)}(x)+\bm\phi^{(t+1)}_N(x).
\end{align*}
This yields the matrix form of the finite nd-Toda lattice \eqref{eq:nd-Toda-finite}:
\begin{equation*}
  B^{(t+1)}=L^{(t+1)}R^{(t+1)}+s^{(t+1)}I_N=R^{(t)}L^{(t)}+s^{(t)}I_N.
\end{equation*}
Since $L^{(t)}$ is always regular, it is shown that the tridiagonal matrices
$B^{(t)}$ and $B^{(t+1)}$ are similar:
\begin{equation*}
  B^{(t+1)}
  =R^{(t)}L^{(t)}+s^{(t)}I_N
  =\left(L^{(t)}\right)^{-1}\left(L^{(t)}R^{(t)}+s^{(t)}I_N\right)L^{(t)}
  =\left(L^{(t)}\right)^{-1}B^{(t)}L^{(t)}.
\end{equation*}
Therefore, the eigenvalues of $B^{(t)}$ are conserved under the time evolution.
This corresponds to the fact \eqref{eq:op-christoffel-N} that
the characteristic polynomial $\phi^{(t)}_N(x)$ is invariant under
the time evolution.
From the result, we can see that the spectral transformations
\eqref{eq:op-christoffel-matrix} and \eqref{eq:op-geronimus-matrix} correspond to
the $LU$ decomposition of the tridiagonal matrix $B^{(t)}$ with the shift $s^{(t)}$.

\subsection{The dqds algorithm}
\label{sec:dqds-algorithm}

For the finite nd-Toda lattice \eqref{eq:nd-Toda-finite},
let us introduce an auxiliary variable
\begin{equation}\label{eq:dqds-aux}
  d^{(t+1)}_n \coloneq q^{(t+1)}_n-e^{(t)}_{n+1},\quad
  n=0, 1, \dots, N-1.
\end{equation}
Then, equations \eqref{eq:nd-Toda-finite} are rewritten as
\begin{subequations}\label{eq:dqds}
  \begin{alignat}{2}
    &d^{(t+1)}_0=q^{(t)}_0-(s^{(t+1)}-s^{(t)}),\label{eq:dqds-d-0}\\
    &d^{(t+1)}_n=d^{(t+1)}_{n-1}\frac{q^{(t)}_n}{q^{(t+1)}_{n-1}}-(s^{(t+1)}-s^{(t)}),\label{eq:dqds-d-n}
    &\quad&n=1, 2, \dots, N-1,\\
    &q^{(t+1)}_n=e^{(t)}_{n+1}+d^{(t+1)}_n,
    &&n=0, 1, \dots, N-1,\\
    &e^{(t+1)}_n=e^{(t)}_n\frac{q^{(t)}_n}{q^{(t+1)}_{n-1}},
    &&n=1, 2, \dots, N-1,\\
    &e^{(t)}_0=e^{(t)}_N=0&& \text{for all $t \ge 0$}.
  \end{alignat}
\end{subequations}
These recurrence equations are called the \emph{dqds algorithm}.

The spectral transformations \eqref{eq:op-christoffel} and
\eqref{eq:op-geronimus} yields
\begin{equation*}
  \phi^{(t+1)}_n(x)=\frac{\phi^{(t-1)}_{n+1}(x)+d^{(t)}_n\phi^{(t)}_n(x)}{x-s^{(t)}},\quad
  n=0, 1, \dots, N-1.
\end{equation*}
Hence, we obtain
\begin{equation}\label{eq:dqds-d}
  d^{(t)}_n
  =-\frac{\phi^{(t-1)}_{n+1}(s^{(t)})}{\phi^{(t)}_n(s^{(t)})}
  =\frac{\tau^{(t)}_n\sigma^{(t)}_{n+1}}{\tau^{(t-1)}_{n+1}\tau^{(t+1)}_n},
\end{equation}
with
\begin{equation*}
  \sigma^{(t)}_n\coloneq|\mu^{(t-1)}_{i+j+1}-s^{(t)}\mu^{(t-1)}_{i+j}|_{0 \le i, j \le n-1}.
\end{equation*}
By a calculation similar to the derivation of the expanded form
\eqref{eq:nd-Toda-finite-hankel},
we obtain the expanded form of $\sigma^{(t)}_n$
\begin{equation}\label{eq:dqds-sigma}
  \sigma^{(t)}_n=\sum_{0 \le r_0<r_1<\dots<r_{n-1} \le N-1} \left(\prod_{i=0}^{n-1} \left(c^{(0)}_{r_i} (x_{r_i}-s^{(t)})\prod_{j=0}^{t-2} (x_{r_i}-s^{(j)})\right)\prod_{0 \le \nu_0<\nu_1 \le n-1} (x_{r_{\nu_1}}-x_{r_{\nu_0}})^2\right).
\end{equation}

We henceforth assume that, for the elements of the initial tridiagonal
matrix $B^{(0)}$, the following conditions are satisfied:
$u^{(0)}_0, u^{(0)}_1, \dots, u^{(0)}_{N-1}$ are all real and
$w^{(0)}_1, w^{(0)}_2, \dots, w^{(0)}_{N-1}$ are all real and positive.
Then, the tridiagonal matrix $B^{(0)}$ is similar to a real symmetric
tridiagonal matrix. The eigenvalues $x_0, x_1, \dots, x_{N-1}$ of $B^{(0)}$
are thus all real and simple.
In addition, the constants $c^{(0)}_0, c^{(0)}_1, \dots, c^{(0)}_{N-1}$ are all
real and positive by Theorem \ref{thm:gqf}.
Accordingly, the solution \eqref{eq:nd-Toda-finite-qe} and \eqref{eq:dqds-d} with
the expanded forms \eqref{eq:nd-Toda-finite-hankel} and
\eqref{eq:dqds-sigma} gives the next theorem.
\begin{theorem}
  Suppose that $u^{(0)}_0, u^{(0)}_1, \dots, u^{(0)}_{N-1}$ are all real, and
  $w^{(0)}_1, w^{(0)}_2, \dots, w^{(0)}_{N-1}$ are all real and positive.
  Choose the parameter $s^{(t)}$ as
  \begin{equation}\label{eq:dqds-s-cond}
    s^{(t)}<\min\{x_0, x_1, \dots, x_{N-1}\} \quad \text{for all $t \ge 0$}.
  \end{equation}
  Then, the variables $q^{(t)}_n$, $e^{(t)}_n$ and $d^{(t)}_n$ of the dqds
  algorithm \eqref{eq:dqds} are positive for all $n$ and $t \ge 0$.
\end{theorem}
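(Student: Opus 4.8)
The plan is to read off the positivity of $q^{(t)}_n$, $e^{(t)}_n$ and $d^{(t)}_n$ directly from their closed-form solutions as ratios of Hankel determinants, after first showing that every determinant appearing in those ratios is strictly positive. Concretely, I would argue that, under the stated hypotheses, each of $\tau^{(t)}_n$ and $\sigma^{(t)}_n$ is a sum of strictly positive summands, hence strictly positive, and then substitute into the solution formulas \eqref{eq:nd-Toda-finite-qe} and \eqref{eq:dqds-d}.

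First I would fix the sign of every factor occurring in the expanded forms \eqref{eq:nd-Toda-finite-hankel} and \eqref{eq:dqds-sigma}. Because $u^{(0)}_n$ are real and $w^{(0)}_n$ real and positive, $B^{(0)}$ is similar to a real symmetric tridiagonal matrix, so its eigenvalues $x_0, \dots, x_{N-1}$ are real and simple; in particular the Vandermonde-type factors $(x_{r_{\nu_1}}-x_{r_{\nu_0}})^2$ are strictly positive. Theorem~\ref{thm:gqf} gives $c^{(0)}_i>0$. Finally, the shift condition \eqref{eq:dqds-s-cond}, namely $s^{(j)}<\min\{x_0,\dots,x_{N-1}\}$, forces $x_{r_i}-s^{(j)}>0$ for every index. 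Consequently each summand of \eqref{eq:nd-Toda-finite-hankel} is a product of positive reals, so $\tau^{(t)}_n>0$ for all $t\ge 0$ and $1\le n\le N$ (and $\tau^{(t)}_0=1$); the same factorwise argument, using in addition that $x_{r_i}-s^{(t)}>0$, yields $\sigma^{(t)}_n>0$.

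With all determinants strictly positive, the denominators in \eqref{eq:nd-Toda-finite-qe} and \eqref{eq:dqds-d} never vanish, and each of the three solution variables
\[
  q^{(t)}_n=\frac{\tau^{(t)}_n\tau^{(t+1)}_{n+1}}{\tau^{(t)}_{n+1}\tau^{(t+1)}_n},\quad
  e^{(t)}_n=\frac{\tau^{(t)}_{n+1}\tau^{(t+1)}_{n-1}}{\tau^{(t)}_n\tau^{(t+1)}_n},\quad
  d^{(t)}_n=\frac{\tau^{(t)}_n\sigma^{(t)}_{n+1}}{\tau^{(t-1)}_{n+1}\tau^{(t+1)}_n}
\]
is a ratio of products of strictly positive numbers, hence strictly positive. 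I would close by checking that the relevant index ranges cause no trouble: the boundary values $\tau^{(t)}_0=1$ and $\sigma^{(t)}_1>0$ cover the small cases, the full determinant $\tau^{(t)}_N>0$ (a single positive summand) covers $n=N-1$ in $q$, the interior range $1\le n\le N-1$ is where $e^{(t)}_n$ is genuinely dynamical, and $d^{(t)}_n$ is read off for $t\ge 1$, where the formula \eqref{eq:dqds-d} is valid.

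I do not expect a genuine obstacle here: the substantive work has already been done in deriving the subtraction-free expanded forms \eqref{eq:nd-Toda-finite-hankel} and \eqref{eq:dqds-sigma}, and the argument reduces to a sign inspection of their factors together with bookkeeping over index ranges. The only point requiring mild care is confirming that the shift hypothesis \eqref{eq:dqds-s-cond} is exactly what makes the linear factors $x_{r_i}-s^{(j)}$ (and $x_{r_i}-s^{(t)}$ in $\sigma^{(t)}_n$) simultaneously positive for every $j$ up to the current time, so that positivity is preserved for all $t$ rather than only at the initial step.
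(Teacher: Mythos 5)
Your proposal is correct and follows essentially the same route as the paper: the authors likewise observe that the hypotheses make $B^{(0)}$ similar to a real symmetric tridiagonal matrix (so the $x_i$ are real and simple), invoke Theorem~\ref{thm:gqf} for $c^{(0)}_i>0$, and then read off positivity of $q^{(t)}_n$, $e^{(t)}_n$, $d^{(t)}_n$ from the expanded forms \eqref{eq:nd-Toda-finite-hankel} and \eqref{eq:dqds-sigma}, whose summands are all positive under the shift condition \eqref{eq:dqds-s-cond}. Your write-up simply makes the sign inspection and the index bookkeeping more explicit than the paper does.
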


This theorem guarantees that, under the assumption, the dqds algorithm
does not contain subtraction operations except the parameter terms
$-(s^{(t+1)}-s^{(t)})$ in equations \eqref{eq:dqds-d-0} and \eqref{eq:dqds-d-n}.
Namely, equations \eqref{eq:dqds} are the subtraction-free form of the
finite nd-Toda lattice \eqref{eq:nd-Toda-finite}.
It is known that this form improves the accuracy of the numerical computation.

The asymptotic analysis of the dqds algorithm proves convergence of the algorithm
and provides a method for accelerating the convergence.
The solution derived in Subsection~\ref{sec:op-finite-dimens-case}
is a key tool for the analysis.
Arrange the eigenvalues $x_0,  x_1, \dots, x_{N-1}$ of
the initial tridiagonal matrix $B^{(0)}$ in descending
order: $x_0>x_1>\dots>x_{N-1}$.
If the parameter $s^{(t)}$ chosen as \eqref{eq:dqds-s-cond}, namely
$s^{(t)}<x_{N-1}$, then the inequality $x_0-s^{(t)}>x_1-s^{(t)}>\dots>x_{N-1}-s^{(t)}>0$ holds for all $t \ge 0$.
Under this assumption, by the solution \eqref{eq:nd-Toda-finite-qe} with
the expanded form \eqref{eq:nd-Toda-finite-hankel},
we obtain the asymptotic behaviour for $t \gg 0$:
\begin{equation*}
  q^{(t)}_n=x_n-s^{(t)}+O\left(\max\left\{\frac{\prod_{j=0}^{t}(x_n-s^{(j)})}{\prod_{j=0}^{t-1} (x_{n-1}-s^{(j)})}, \frac{\prod_{j=0}^{t}(x_{n+1}-s^{(j)})}{\prod_{j=0}^{t-1} (x_{n}-s^{(j)})}\right\}\right),\quad
  e^{(t)}_n=O\left(\frac{\prod_{j=0}^{t-1} (x_n-s^{(j)})}{\prod_{j=0}^{t}(x_{n-1}-s^{(j)})}\right).
\end{equation*}
This shows that $q^{(t)}_n$ and $e^{(t)}_n$ converge to $x_n-s^{(t)}$
and $0$ as $t \to +\infty$, respectively.
Hence, it is shown that the dqds algorithm \eqref{eq:dqds}
with appropriate parameters $s^{(t)}$ computes the eigenvalues of
a given real symmetric tridiagonal matrix.
It is clear that
the convergence speed depends on $\frac{x_n-s^{(t)}}{x_{n-1}-s^{(t)}}$.
Therefore, we should choose the parameter $s^{(t)}<x_{N-1}$
as close as possible to the minimum eigenvalue $x_{N-1}$ for fast computation.
The acceleration parameter $s^{(t)}$ is called the \emph{origin shift}.

\section{\Rii polynomials, \Rii chain, and generalized eigenvalue algorithm}
\label{sec:rii-polynomials-rii}

We shall extend the discussion in Section~\ref{sec:orth-polyn-nd}
for tridiagonal matrix pencils and
its associated nonautonomous discrete integrable system.

\subsection{Infinite dimensional case}
\label{sec:infin-dimens-case-rii}
Let us consider two tridiagonal semi-infinite matrices in the following forms:
\begin{alignat*}{2}
  &A^{(t)}=
  \begin{pmatrix}
    v^{(t)}_0 & \kappa_t\\
    \lambda_1 w^{(t)}_1 & v^{(t)}_1 & \kappa_{t+1}\\
    & \lambda_2 w^{(t)}_2 & v^{(t)}_2 & \kappa_{t+2}\\
    && \lambda_3 w^{(t)}_3 & \ddots & \ddots\\
    &&& \ddots & \ddots
  \end{pmatrix},&\quad&
  v^{(t)}_n, \kappa_{t+n}, \lambda_n \in \mathbb C,\quad
  w^{(t)}_n \in \mathbb C-\{0\},\\
  &B^{(t)}=
  \begin{pmatrix}
    u^{(t)}_0 & 1\\
    w^{(t)}_1 & u^{(t)}_1 & 1\\
    & w^{(t)}_2 & u^{(t)}_2 & 1\\
    && w^{(t)}_3 & \ddots & \ddots\\
    &&& \ddots & \ddots
  \end{pmatrix},&&
  u^{(t)}_n\in \mathbb C.
\end{alignat*}
Let $A^{(t)}_n$ and $B^{(t)}_n$ denote the $n$-th order leading principal
submatrices of $A^{(t)}$ and $B^{(t)}$, respectively.
We now define a polynomial sequence $\{\varphi^{(t)}_n(x)\}_{n=0}^\infty$ by
\begin{equation*}
  \varphi^{(t)}_0(x)\coloneq 1,\quad
  \varphi^{(t)}_n(x)\coloneq \det(xB^{(t)}_n-A^{(t)}_n),\quad
  n=1, 2, 3, \dots.
\end{equation*}
The polynomial $\varphi^{(t)}_n(x)$ is a monic polynomial of degree $n$.
In the same manner as in the case of monic orthogonal polynomials
in Section~\ref{sec:orth-polyn-nd},
we obtain the three-term recurrence relation
\begin{equation}\label{eq:rii-trr}
  \varphi^{(t)}_{n+1}(x)=(u^{(t)}_n x-v^{(t)}_n)\varphi^{(t)}_n(x)-w^{(t)}_n(x-\kappa_{t+n-1})(x-\lambda_n)\varphi^{(t)}_{n-1}(x),\quad
  n=0, 1, 2, \dots,
\end{equation}
where we set $w^{(t)}_0\coloneq 0$ and $\varphi^{(t)}_{-1}(x)\coloneq 0$.
We will assume in what follows that
all the parameters $\kappa_{t+k}$ and $\lambda_k$, $k=0, 1, 2, \dots$,
are not zeros of the polynomial $\varphi^{(t)}_n(x)$ for all $n \in \mathbb N$.
The polynomials $\{\varphi^{(t)}_n(x)\}_{n=0}^\infty$ are called
the \emph{\Rii polynomials} with respect to $\mathcal L^{(t)}$,
introduced by Ismail and Masson \cite{ismail1995goa}.

We introduce the notations
\begin{equation*}
  K^{(t)}_k(x)\coloneq \prod_{j=0}^{k-1} (x-\kappa_{t+j}),\quad
  K_k(x)\coloneq K^{(0)}_k(x)=\prod_{j=0}^{k-1} (x-\kappa_j),\quad
  L_l(x)\coloneq \prod_{j=1}^{l} (x-\lambda_j),
\end{equation*}
and $\mathcal D(\mathcal L^{(t)})$ a linear space spanned by
the rational functions $\frac{x^m}{K^{(t)}_k(x)L_l(x)}$, $k, l=0, 1, 2, \dots$;
$m=0, 1, \dots, k+l$.
The following Favard type theorem is proved.

\begin{theorem}[Favard type theorem for the \Rii polynomials \cite{ismail1995goa}]\label{thm:rii-favard}
  For the \Rii polynomials $\{\varphi^{(t)}_n(x)\}_{n=0}^\infty$ and
  any nonzero constants $h^{(t)}_0$ and $h^{(t)}_1$, which satisfy $h^{(t)}_0 \ne h^{(t)}_1$,
  there exists a unique linear functional defined on
  $\mathcal D(\mathcal L^{(t)})$ such that the orthogonality relation
  \begin{equation*}\label{eq:rii-orthogonality}
    \mathcal L^{(t)}\left[\frac{x^m\varphi^{(t)}_n(x)}{K^{(t)}_n(x)L_n(x)}\right]
    =h^{(t)}_n \delta_{m, n},\quad
    n=0, 1, 2, \dots,\quad
    m=0, 1, 2, \dots, n,
  \end{equation*}
  holds, where $h^{(t)}_n$, $n=2, 3, \dots$, are some nonzero constants.
\end{theorem}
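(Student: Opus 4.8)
The plan is to follow the constructive proof of the classical Favard theorem \cite{chihara1978iop}, adapted to the rational function space $\mathcal{D}(\mathcal{L}^{(t)})$. First I would organise $\mathcal{D}(\mathcal{L}^{(t)})$ by the filtration $V_0\subset V_1\subset\cdots$, where $V_n$ is the set of rational functions $p(x)/(K^{(t)}_n(x)L_n(x))$ with $\deg p\le 2n$. Since $K^{(t)}_n L_n$ is monic of degree $2n$, one checks that $\dim V_n=2n+1$, that $V_{n-1}\subset V_n$ (multiply numerator and denominator of any element of $V_{n-1}$ by $(x-\kappa_{t+n-1})(x-\lambda_n)$), and that $\mathcal{D}(\mathcal{L}^{(t)})=\bigcup_n V_n$. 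Because $\varphi^{(t)}_j$ is of degree $j$, the family $\{\varphi^{(t)}_j/(K^{(t)}_n L_n)\}_{j=0}^{2n}$ is a basis of $V_n$, so defining $\mathcal{L}^{(t)}$ amounts to prescribing its values on these.

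Existence and uniqueness of the functional I would obtain by extending $\mathcal{L}^{(t)}$ one level at a time. Set $\mathcal{L}^{(t)}[1]=h^{(t)}_0$ on $V_0$. Going from $V_{n-1}$ to $V_n$ adds exactly two dimensions, so two scalar values must be fixed; I would fix them by imposing two of the orthogonality conditions, namely $\mathcal{L}^{(t)}[x^m\varphi^{(t)}_n/(K^{(t)}_nL_n)]=0$ for $m=0,1$ when $n\ge 2$, and the pair $m=0$ (value $0$) and $m=1$ (value $h^{(t)}_1$) when $n=1$. These two conditions are linearly independent modulo $V_{n-1}$: the residue map $f\mapsto(p(\kappa_{t+n-1}),p(\lambda_n))$, with $p$ the numerator of $f$ written over $K^{(t)}_nL_n$, identifies $V_n/V_{n-1}$ with $\mathbb{C}^2$, and the standing assumption that $\kappa_{t+n-1}$ and $\lambda_n$ are not zeros of $\varphi^{(t)}_n$ makes the relevant $2\times 2$ system invertible. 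Hence each extension exists and is unique, giving a well-defined $\mathcal{L}^{(t)}$ on all of $\mathcal{D}(\mathcal{L}^{(t)})$.

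It remains to verify the full orthogonality $\mathcal{L}^{(t)}[x^m\varphi^{(t)}_n/(K^{(t)}_nL_n)]=h^{(t)}_n\delta_{m,n}$ for every $m\le n$, i.e.\ the conditions not used in the construction, together with $h^{(t)}_n\ne 0$. For this I would induct on $n$, inserting the three-term recurrence \eqref{eq:rii-trr} (with $n$ replaced by $n-1$) into the functional and using $K^{(t)}_n=K^{(t)}_{n-1}(x-\kappa_{t+n-1})$ and $L_n=L_{n-1}(x-\lambda_n)$. The term carrying $(u^{(t)}_{n-1}x-v^{(t)}_{n-1})\varphi^{(t)}_{n-1}$ keeps the denominator equal to $K^{(t)}_nL_n$ and so reduces to level $n-1$ directly; the value $h^{(t)}_n$ would then be read off and shown nonzero by an induction whose base cases $h^{(t)}_0\ne 0$, $h^{(t)}_1\ne 0$ and the hypothesis $h^{(t)}_0\ne h^{(t)}_1$ prevent the recursion from degenerating.

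The hard part will be the other term, $w^{(t)}_{n-1}(x-\kappa_{t+n-2})(x-\lambda_{n-1})\varphi^{(t)}_{n-2}/(K^{(t)}_nL_n)$: after the factors $(x-\kappa_{t+n-2})$ and $(x-\lambda_{n-1})$ cancel against $K^{(t)}_{n-1}$ and $L_{n-1}$, the surviving denominator is $K^{(t)}_{n-2}(x-\kappa_{t+n-1})L_{n-2}(x-\lambda_n)$, whose pole pattern has gaps and does not belong to the canonical filtration $V_{n-2}$. Thus the lower-level orthogonality cannot be applied verbatim, and I expect this mismatch to be the crux of the argument: one must express these ``gapped'' rational functions back in terms of the canonical bases of the $V_k$ (or, equivalently, recast the orthogonality in a pole-symmetric form) before the induction closes. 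Pinning down exactly where $h^{(t)}_0\ne h^{(t)}_1$ is forced, and controlling this gapped term, is where the main work lies.
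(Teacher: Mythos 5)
First, a point of reference: the paper does not prove this theorem at all — it is quoted, with attribution, from Ismail and Masson \cite{ismail1995goa} — so there is no in-paper proof to compare yours against. The comparison below is with the standard argument in that reference, which your plan broadly parallels in its set-up.

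Your scaffolding is sound: the filtration of $\mathcal D(\mathcal L^{(t)})$ by the spaces $V_n$ of numerator degree at most $2n$ over the common denominator $K^{(t)}_n(x)L_n(x)$, the count $\dim V_n=2n+1$, the identification of $V_n/V_{n-1}$ with $\mathbb C^2$ via evaluation of numerators at $\kappa_{t+n-1}$ and $\lambda_n$, and the use of the standing assumption (these points are never zeros of $\varphi^{(t)}_n$) to invert the resulting $2\times2$ system are all correct, and they do give a unique functional once existence is granted. But the proof is not complete, and you say so yourself: everything beyond bookkeeping lies in verifying the $n-1$ conditions per level you did \emph{not} impose ($m=2,\dots,n$) and in showing $h^{(t)}_n\ne0$. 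Two concrete problems remain. First, your claim that the $(u^{(t)}_{n-1}x-v^{(t)}_{n-1})\varphi^{(t)}_{n-1}$ term of \eqref{eq:rii-trr} ``reduces to level $n-1$ directly'' is not right: relative to $\varphi^{(t)}_{n-1}$ its denominator is $K^{(t)}_{n-1}(x)L_{n-1}(x)\,(x-\kappa_{t+n-1})(x-\lambda_n)$, which is no more covered by the level-$(n-1)$ orthogonality than the ``gapped'' $\varphi^{(t)}_{n-2}$ term is; both terms need a partial-fraction reduction onto the canonical bases of the $V_k$ before the induction can close, and that reduction is precisely the computation you defer. Second, the hypothesis $h^{(t)}_0\ne h^{(t)}_1$ is never located in your argument; in the actual proof it is exactly what prevents the recursion determining $h^{(t)}_n$ from degenerating (i.e.\ it keeps a certain $2\times2$ array of moments nonsingular), and without pinning this down you cannot conclude that the constants $h^{(t)}_n$, $n\ge2$, are nonzero, which the statement asserts. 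As it stands the proposal is a correct skeleton with the crux — the inductive closure via partial fractions and the nonvanishing of $h^{(t)}_n$ — left open.
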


In the rest of this paper, we consider the \emph{monic} \Rii polynomials,
i.e., the case where $u^{(t)}_n=1+w^{(t)}_n$ holds for all $n=0, 1, 2, \dots$.
For general tridiagonal semi-infinite matrices of the form $B^{(t)}$,
if $\det(B^{(t)}_n) \ne 0$ holds for all $n=1, 2, 3, \dots$,
then $\left\{\frac{\varphi^{(t)}_n(x)}{\det(B^{(t)}_n)}\right\}_{n=0}^\infty$ are
the monic \Rii polynomials.
Therefore, the following argument is valid for such matrices.

The moment of the \Rii linear functional $\mathcal L^{(t)}$ is introduced by
\begin{equation}\label{eq:rii-moment}
  \mu^{k, l, t}_m\coloneq \mathcal L^{(t)}\left[\frac{x^m}{K^{(t)}_k(x) L_l(x)}\right],\quad
  k, l=0, 1, 2, \dots, \quad m=0, 1, \dots, k+l,
\end{equation}
and its Hankel determinant by
\begin{equation*}
  \tau^{k, l, t}_0\coloneq 1,\quad
  \tau^{k, l, t}_n
  \coloneq |\mu^{k, l, t}_{i+j}|_{0 \le i, j \le n-1}
  =
  \begin{vmatrix}
    \mu^{k, l, t}_0 & \mu^{k, l, t}_1 & \dots & \mu^{k, l, t}_{n-1}\\
    \mu^{k, l, t}_1 & \mu^{k, l, t}_2 & \dots & \mu^{k, l, t}_{n}\\
    \vdots & \vdots & & \vdots\\
    \mu^{k, l, t}_{n-1} & \mu^{k, l, t}_n & \dots & \mu^{k, l, t}_{2n-2}
  \end{vmatrix},\quad
  n=1, 2, 3, \dots.
\end{equation*}
Then, the determinant expression of the monic \Rii polynomials
$\{\varphi^{(t)}_n(x)\}_{n=0}^\infty$ is presented:
\begin{equation}\label{eq:rii-determinant}
  \varphi^{(t)}_n(x)=\frac{1}{\tau^{n, n, t}_n}
  \begin{vmatrix}
    \mu^{n, n, t}_0 & \mu^{n, n, t}_1 & \dots & \mu^{n, n, t}_{n-1} & \mu^{n, n, t}_n\\
    \mu^{n, n, t}_1 & \mu^{n, n, t}_2 & \dots & \mu^{n, n, t}_{n} & \mu^{n, n, t}_{n+1}\\
    \vdots & \vdots & & \vdots & \vdots\\
    \mu^{n, n, t}_{n-1} & \mu^{n, n, t}_n & \dots & \mu^{n, n, t}_{2n-2} & \mu^{n, n, t}_{2n-1}\\
    1 & x & \dots & x^{n-1} & x^n
  \end{vmatrix},\quad n=0, 1, 2, \dots.
\end{equation}

The discrete time evolution for the monic \Rii polynomials is introduced by
an analogue of the spectral transformations for monic orthogonal polynomials:
\begin{subequations}\label{eq:rii-spectral-transform}
  \begin{gather}
    (x-s^{(t)})(1+q^{(t)}_n)\varphi^{(t+1)}_n(x)=\varphi^{(t)}_{n+1}(x)+q^{(t)}_n(x-\kappa_{t+n})\varphi^{(t)}_n(x),\label{eq:rii-christoffel}\\
    (1+e^{(t)}_n)\varphi^{(t)}_n(x)=\varphi^{(t+1)}_n(x)+e^{(t)}_n(x-\lambda_n)\varphi^{(t+1)}_{n-1}(x)\label{eq:rii-geronimus}
  \end{gather}
\end{subequations}
for $n=0, 1, 2, \dots$, where
\begin{equation}\label{eq:rii-q}
  q^{(t)}_n \coloneq -(s^{(t)}-\kappa_{t+n})\frac{\varphi^{(t)}_{n+1}(s^{(t)})}{\varphi^{(t)}_n(s^{(t)})},\quad
  n=0, 1, 2, \dots,
\end{equation}
and $e^{(t)}_n$ is the variable determined by
the compatibility condition:
\begin{subequations}\label{eq:rii}
  \begin{align}
    1+w^{(t+1)}_n
    &=-q^{(t+1)}_n-e^{(t+1)}_n\frac{1+q^{(t+1)}_n}{1+q^{(t+1)}_{n-1}}+(1+q^{(t+1)}_n)(1+e^{(t+1)}_n)\nonumber\\
    &=-q^{(t)}_n\frac{1+e^{(t)}_{n+1}}{1+e^{(t)}_n}-e^{(t)}_{n+1}+(1+q^{(t)}_n)(1+e^{(t)}_{n+1}),\label{eq:rii-1+w}\\
    v^{(t+1)}_n
    &=-\kappa_{t+n+1} q^{(t+1)}_n-\lambda_n e^{(t+1)}_n\frac{1+q^{(t+1)}_n}{1+q^{(t+1)}_{n-1}}+s^{(t+1)}(1+q^{(t+1)}_n)(1+e^{(t+1)}_n)\nonumber\\
    &=-\kappa_{t+n} q^{(t)}_n\frac{1+e^{(t)}_{n+1}}{1+e^{(t)}_n}-\lambda_{n+1}e^{(t)}_{n+1}+s^{(t)}(1+q^{(t)}_n)(1+e^{(t)}_{n+1}),\label{eq:rii-v}\\
    w^{(t+1)}_n
    &=q^{(t+1)}_{n-1}e^{(t+1)}_n\frac{1+q^{(t+1)}_n}{1+q^{(t+1)}_{n-1}}
    =q^{(t)}_n e^{(t)}_n\frac{1+e^{(t)}_{n+1}}{1+e^{(t)}_{n}}\label{eq:rii-w}
  \end{align}
  with the boundary condition
  \begin{equation}
    e^{(t)}_0=0 \quad \text{for all $t \ge 0$}.
  \end{equation}
\end{subequations}
This is the semi-infinite \emph{monic type \Rii chain}.
Note that, since \eqref{eq:rii-1+w} and \eqref{eq:rii-w} are identical,
there are the two independent equations that determine the time evolution of the
two variables $q^{(t)}_n$ and $e^{(t)}_n$.
It is readily verified that if $\{\varphi^{(t)}_{n}(x)\}_{n=0}^\infty$ are
the monic \Rii polynomials with respect to $\mathcal L^{(t)}$,
then polynomials $\{\varphi^{(t+1)}_n(x)\}_{n=0}^\infty$ defined by
the spectral transformation \eqref{eq:rii-christoffel} are again
the monic \Rii polynomials, where the corresponding \Rii linear functional is
defined by
\begin{equation}\label{eq:rii-lf-evolution}
  \mathcal L^{(t+1)}[R(x)]\coloneq \mathcal L^{(t)}\left[\frac{x-s^{(t)}}{x-\kappa_t} R(x)\right]
\end{equation}
for all $R(x) \in \mathcal D(\mathcal L^{(t+1)})$.

Let us derive a solution to the monic type \Rii chain.
By the definition of the moment \eqref{eq:rii-moment} and the time evolution of
the linear functional \eqref{eq:rii-lf-evolution}, we obtain the relations
\begin{subequations}\label{eq:rii-dispersion}
  \begin{gather}
    \mu^{k, l, t}_{m}
    =\mu^{k+1, l, t}_{m+1}-\kappa_{t+k}\mu^{k+1, l, t}_{m}
    =\mu^{k, l+1, t}_{m+1}-\lambda_{l+1}\mu^{k, l+1, t}_{m},\label{eq:rii-moment-kappa-lambda}\\
    \mu^{k, l, t+1}_m
    =\mu^{k+1, l, t}_{m+1}-s^{(t)}\mu^{k+1, l, t}_m.\label{eq:rii-moment-time-evol}
  \end{gather}
\end{subequations}
The relation \eqref{eq:rii-moment-time-evol}, the determinant expression
of the monic \Rii polynomials \eqref{eq:rii-determinant}, and the definition of
the variable \eqref{eq:rii-q} lead to
\begin{equation}\label{eq:rii-sol-q}
  q^{(t)}_n=(s^{(t)}-\kappa_{t+n})^{-1}\frac{\tau^{n, n, t}_n\tau^{n, n+1, t+1}_{n+1}}{\tau^{n-1, n, t+1}_n\tau^{n+1, n+1, t}_{n+1}}.
\end{equation}
Next, the relation \eqref{eq:rii-moment-kappa-lambda} and
the spectral transformation \eqref{eq:rii-christoffel} yield
\begin{equation*}
  1+q^{(t)}_n=(\kappa_{t+n}-s^{(t)})^{-1}\frac{\varphi^{(t)}_{n+1}(\kappa_{t+n})}{\varphi^{(t+1)}_n(\kappa_{t+n})}
  =(s^{(t)}-\kappa_{t+n})^{-1}\frac{\tau^{n, n+1, t}_{n+1}\tau^{n, n, t+1}_n}{\tau^{n+1, n+1, t}_{n+1}\tau^{n-1, n, t+1}_n}.
\end{equation*}
Further,
the Jacobi identity for determinants \cite[Section~2.6]{hirota2004dms} proves
the bilinear equation
\begin{equation*}
  \tau^{k-1, l-1, t}_n\tau^{k, l, t}_n-\tau^{k-1, l, t}_n\tau^{k, l-1, t}_n-\tau^{k-1, l-1, t}_{n-1}\tau^{k, l, t}_{n+1}=0.
\end{equation*}
By using this bilinear equation and the three-term recurrence relation
\eqref{eq:rii-trr}, we obtain
\begin{equation*}
  w^{(t)}_n
  =\frac{\mathcal L^{(t)}\left[\frac{x^{n+1}\varphi^{(t)}_{n+1}(x)}{K^{(t)}_{n+1}(x)L_{n+1}(x)}-\frac{x^{n}\varphi^{(t)}_{n}(x)}{K^{(t)}_{n}(x)L_{n}(x)}\right]}{\mathcal L^{(t)}\left[\frac{x^n\varphi^{(t)}_n(x)}{K^{(t)}_{n}(x)L_{n}(x)}-\frac{x^{n-1}\varphi^{(t)}_{n-1}(x)}{K^{(t)}_{n-1}(x)L_{n-1}(x)}\right]}
  =\frac{\tau^{n-1, n-1, t}_{n-1}\tau^{n, n+1, t}_{n+1}\tau^{n+1, n, t}_{n+1}}{\tau^{n-1, n, t}_n\tau^{n, n-1, t}_n\tau^{n+1, n+1, t}_{n+1}}.
\end{equation*}
Hence, from equation \eqref{eq:rii-w} and these formulae, we find a solution
\begin{equation}\label{eq:rii-sol-e}
  e^{(t)}_n
  =\frac{w^{(t)}_n}{q^{(t)}_{n-1}}\frac{1+q^{(t)}_{n-1}}{1+q^{(t)}_n}
  =(s^{(t)}-\kappa_{t+n})\frac{\tau^{n-1, n-1, t+1}_{n-1}\tau^{n+1, n, t}_{n+1}}{\tau^{n, n-1, t}_n\tau^{n, n, t+1}_n}.
\end{equation}
If the moments $\mu^{k, l, t}_m$ are arbitrary functions satisfying
the relations \eqref{eq:rii-dispersion}, e.g.,
\begin{equation*}
  \mu^{k, l, t}_m=\int_\Omega \frac{x^m \prod_{j=0}^{t-1}(x-s^{(j)})}{K_{t+n}(x)L_n(x)} \omega(x)\, \mathrm{d}x,
\end{equation*}
then \eqref{eq:rii-sol-q} and \eqref{eq:rii-sol-e} give a solution to
the monic type \Rii chain \eqref{eq:rii} expressed
by the Hankel determinant $\tau^{k, l, t}_n$.

The reason why the Hankel determinant appears can be explained
from the point of view of the discrete two-dimensional
Toda hierarchy \cite{tsujimoto2010dso}.
Note that there is another determinant expression of the \Rii polynomials and
a solution to the \Rii chain: the Casorati-type determinant solution
\cite{spiridonov2003tbr,mukaihira2006dsn}.

\subsection{Finite dimensional case}
\label{sec:finite-dimens-case}

In this subsection, we will derive the solution to the initial value problem
and the convergence theorem for the monic type finite \Rii chain.

Let us start with a pair of tridiagonal matrices of order $N$:
\begin{equation}\label{eq:rii-tridiagonal-finite}
  \hspace{-\mathindent}
  A^{(t)}=
  \begin{pmatrix}
    v^{(t)}_0 & \kappa_t\\
    \lambda_1 w^{(t)}_1 & v^{(t)}_1 & \kappa_{t+1}\\
    & \lambda_2 w^{(t)}_2 & \ddots & \ddots\\
    && \ddots & \ddots & \kappa_{t+N-2}\\
    &&& \lambda_{N-1}w^{(t)}_{N-1} & v^{(t)}_{N-1}
  \end{pmatrix},\
  B^{(t)}=
  \begin{pmatrix}
    1 & 1\\
    w^{(t)}_1 & 1+w^{(t)}_1 & 1\\
    & w^{(t)}_2 & \ddots & \ddots\\
    && \ddots & \ddots & 1\\
    &&& w^{(t)}_{N-1} & 1+w^{(t)}_{N-1}
  \end{pmatrix}.
\end{equation}
The corresponding monic type finite \Rii chain is
\begin{subequations}\label{eq:rii-finite}
  \begin{gather}
    \begin{multlined}[b]
      \kappa_{t+n+1} q^{(t+1)}_n+\lambda_n e^{(t+1)}_n\frac{1+q^{(t+1)}_n}{1+q^{(t+1)}_{n-1}}-s^{(t+1)}(1+q^{(t+1)}_n)(1+e^{(t+1)}_n)\\
      =\kappa_{t+n} q^{(t)}_n\frac{1+e^{(t)}_{n+1}}{1+e^{(t)}_n}+\lambda_{n+1}e^{(t)}_{n+1}-s^{(t)}(1+q^{(t)}_n)(1+e^{(t)}_{n+1}),
    \end{multlined}\\
    q^{(t+1)}_{n-1}e^{(t+1)}_n\frac{1+q^{(t+1)}_n}{1+q^{(t+1)}_{n-1}}
    =q^{(t)}_n e^{(t)}_n\frac{1+e^{(t)}_{n+1}}{1+e^{(t)}_{n}},\\
    e^{(t)}_0=e^{(t)}_N=0 \quad \text{for all $t \ge 0$}.
  \end{gather}
\end{subequations}
To derive the solution to the initial value problem for the
monic type finite \Rii chain \eqref{eq:rii-finite}, we consider the monic finite
\Rii polynomials $\{\varphi^{(t)}_n(x)\}_{n=0}^\infty$ defined by
$\varphi^{(t)}_n(x)\coloneq \det(xB^{(t)}_n-A^{(t)}_n)$.
We should remark that $\varphi^{(t)}_N(x)$ is the characteristic polynomial
of the tridiagonal matrix pencil $(A^{(t)}, B^{(t)})$; the zeros of
the polynomial $\varphi^{(t)}_N(x)$ are the generalized eigenvalues of
the matrix pencil $(A^{(t)}, B^{(t)})$, i.e., the solutions of the equation
\begin{equation*}
  A^{(t)}\bm\Phi=xB^{(t)}\bm\Phi,\quad
  x \in \mathbb C,\quad \bm\Phi \in \mathbb C^N -\{\bm 0\}.
\end{equation*}

Let $\mathcal D(\mathcal L^{(t)})$ be a linear space spanned by
the rational functions $\frac{x^m}{K^{(t)}_N(x)L_N(x)}$,
$m=0, 1, 2, \dots$.
For the monic finite \Rii polynomials $\{\varphi^{(t)}_n(x)\}_{n=0}^N$
and any nonzero constant $H^{(t)}$,
there exists a unique linear functional defined on
$\mathcal D(\mathcal L^{(t)})$ such that the orthogonality relation
\begin{subequations}
  \begin{alignat}{3}
    &\mathcal L^{(t)}\left[\frac{x^m \varphi^{(t)}_n(x)}{K^{(t)}_n(x)L_n(x)}\right]=h^{(t)}_n \delta_{m, n},&\quad&
    n=0, 1, \dots, N-1,&\quad&
    m=0, 1, \dots, n,\\
    \intertext{and the terminating condition}
    &\mathcal L^{(t)}\left[\frac{x^m \varphi^{(t)}_N(x)}{K^{(t)}_{k}(x)L_{l}(x)}\right]=0,&&
    k, l=0, 1, \dots, N,&&
    m=0, 1, 2, \dots,\label{eq:rii-finite-orthogonality-2}
  \end{alignat}
\end{subequations}
hold,
where the constants $h^{(t)}_0, h^{(t)}_1, \dots, h^{(t)}_{N-1}$ are given
by solving the following linear equation
\begin{equation*}
  \begin{pmatrix}
    -1 & 1\\
    w^{(t)}_1 & -(1+w^{(t)}_1) & 1\\
    & w^{(t)}_2 & \ddots & \ddots\\
    && \ddots & \ddots & 1\\
    &&& w^{(t)}_{N-1} & -(1+w^{(t)}_{N-1})
  \end{pmatrix}
  \begin{pmatrix}
    h^{(t)}_0\\
    h^{(t)}_1\\
    \vdots\\
    h^{(t)}_{N-1}
  \end{pmatrix}=
  \begin{pmatrix}
    -H^{(t)}\\
    0\\
    \vdots\\
    0
  \end{pmatrix},
\end{equation*}
i.e.,
\begin{gather*}
  h^{(t)}_0=H^{(t)}(1+w^{(t)}_1+w^{(t)}_1w^{(t)}_2+\dots+w^{(t)}_1 w^{(t)}_2\dots w^{(t)}_{N-1}),\\
  h^{(t)}_1=H^{(t)}(w^{(t)}_1+w^{(t)}_1w^{(t)}_2+\dots+w^{(t)}_1 w^{(t)}_2\dots w^{(t)}_{N-1}),\\
  \vdots\\
  h^{(t)}_{N-1}=H^{(t)}w^{(t)}_1 w^{(t)}_2\dots w^{(t)}_{N-1}.
\end{gather*}
Note that, for the infinite dimensional case,
there are two degrees of freedom: the choice of
the two constants $h^{(t)}_0$ and $h^{(t)}_1$ (see Theorem~\ref{thm:rii-favard}).
For the finite dimensional case, however, there is only one degree of freedom:
the choice of the constant $H^{(t)}$.
The cause of this is the terminating condition
\eqref{eq:rii-finite-orthogonality-2}.

To derive a realization of $\mathcal L^{(t)}$,
we give a quadrature formula for the \Rii linear functional.
Suppose that all the zeros $x_0, x_1, \dots, x_{N-1}$ of
the characteristic polynomial $\varphi^{(t)}_{N}(x)$ are simple.
\begin{theorem}[The quadrature formula for the \Rii linear functional]
  Let $x_0, x_1, \dots, x_{N-1}$ be the simple zeros of
  the characteristic polynomial $\varphi^{(t)}_{N}(x)$.
  For the linear functional $\mathcal L^{(t)}$ of the monic finite
  \Rii polynomials $\{\varphi^{(t)}_n(x)\}_{n=0}^N$, there exist some constants
  $c^{(t)}_0, c^{(t)}_1, \dots, c^{(t)}_{N-1}$ such that
  \begin{equation}\label{eq:rii-quadrature}
    \mathcal L^{(t)}[R(x)]=\sum_{i=0}^{N-1} c^{(t)}_i R(x_i)
  \end{equation}
  holds for all $R(x) \in \mathcal D(\mathcal L^{(t)})$.
\end{theorem}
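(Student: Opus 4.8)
The plan is to mimic the proof of the Gauss quadrature formula (Theorem~\ref{thm:gqf}), replacing polynomials by the rational functions in $\mathcal D(\mathcal L^{(t)})$ and using the terminating condition \eqref{eq:rii-finite-orthogonality-2} in the role played there by the orthogonality beyond degree $N$. First I would record the structural facts: every $R \in \mathcal D(\mathcal L^{(t)})$ has the form $R(x) = P(x)/(K^{(t)}_N(x)L_N(x))$ for some polynomial $P$, since $\mathcal D(\mathcal L^{(t)})$ is spanned by the $x^m/(K^{(t)}_N(x)L_N(x))$. Moreover, the standing assumption that the $\kappa_{t+j}$ and $\lambda_j$ are not zeros of $\varphi^{(t)}_N$ guarantees $K^{(t)}_N(x_i)L_N(x_i)\ne 0$, so that every point value $R(x_i)$ on the right-hand side of \eqref{eq:rii-quadrature} is well defined.

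Next I would isolate the finite-dimensional subspace $V$ spanned by the rational functions $x^m/(K^{(t)}_N(x)L_N(x))$, $m = 0, 1, \dots, N-1$, which has dimension $N$, and consider the evaluation map $E\colon V \to \mathbb C^N$, $E(R) = (R(x_0), \dots, R(x_{N-1}))$. Writing $R = P/(K^{(t)}_N L_N)$ with $\deg P \le N-1$ and using that the denominator does not vanish at the $x_i$, the condition $E(R) = 0$ forces $P$ to possess the $N$ distinct roots $x_0, \dots, x_{N-1}$, whence $P = 0$; thus $E$ is injective and, by the dimension count, an isomorphism (equivalently, its matrix is a nonsingular Vandermonde matrix, the zeros being simple). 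Consequently the evaluation functionals $R \mapsto R(x_i)$ form a basis of the dual space $V^*$, and the restriction $\mathcal L^{(t)}|_V$ admits a unique expression $\sum_{i=0}^{N-1} c^{(t)}_i R(x_i)$ for suitable constants $c^{(t)}_0, \dots, c^{(t)}_{N-1}$. This defines the quadrature weights.

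It then remains to propagate the identity from $V$ to all of $\mathcal D(\mathcal L^{(t)})$, and this is where the terminating condition does the real work. For a general $R = P/(K^{(t)}_N L_N)$ I would divide $P(x) = Q(x)\varphi^{(t)}_N(x) + r(x)$ with $\deg r \le N-1$, so that $R = Q\varphi^{(t)}_N/(K^{(t)}_N L_N) + r/(K^{(t)}_N L_N)$. Expanding $Q$ in powers of $x$ and applying \eqref{eq:rii-finite-orthogonality-2} with $k = l = N$ term by term shows $\mathcal L^{(t)}[Q\varphi^{(t)}_N/(K^{(t)}_N L_N)] = 0$, hence $\mathcal L^{(t)}[R] = \mathcal L^{(t)}[r/(K^{(t)}_N L_N)]$. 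On the other side, $\varphi^{(t)}_N(x_i) = 0$ gives $R(x_i) = r(x_i)/(K^{(t)}_N(x_i) L_N(x_i))$, so $\sum_i c^{(t)}_i R(x_i) = \sum_i c^{(t)}_i r(x_i)/(K^{(t)}_N(x_i) L_N(x_i))$, which equals $\mathcal L^{(t)}[r/(K^{(t)}_N L_N)]$ by the defining property of the weights on $V$. Both sides therefore coincide for every $R$, and linearity completes the argument.

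The main obstacle I anticipate is conceptual rather than computational: the space $\mathcal D(\mathcal L^{(t)})$ is infinite-dimensional, so the quadrature identity cannot follow from a naive dimension count against $N$ evaluation functionals. The terminating condition \eqref{eq:rii-finite-orthogonality-2} is precisely the mechanism that collapses this infinite-dimensional problem onto the $N$-dimensional space $V$, on which the simplicity of the zeros makes the Vandermonde system nonsingular. I would also be careful to carry the hypothesis that the $x_i$ avoid the poles $\kappa_{t+j}$ and $\lambda_j$ throughout, since it is needed both for the point values $R(x_i)$ to be meaningful and for the injectivity of $E$.
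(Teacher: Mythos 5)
Your proposal is correct and follows essentially the same route as the paper: both decompose $R$ into an interpolant at the nodes $x_0,\dots,x_{N-1}$ (your $r/(K^{(t)}_N L_N)$ is exactly the paper's $\Lambda$) plus a term of the form $P\varphi^{(t)}_N/(K^{(t)}_N L_N)$, which the terminating condition \eqref{eq:rii-finite-orthogonality-2} annihilates. The only cosmetic difference is that the paper realizes the weights explicitly as $c^{(t)}_i=\mathcal L^{(t)}[\ell^{(t)}_i]$ via a Lagrange-type basis, whereas you obtain them abstractly from the Vandermonde/duality argument on $V$.
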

\begin{proof}
  This proof is an analogue of the proof to the Gauss quadrature formula
  (Theorem \ref{thm:gqf}).
  For the given rational function $R(x)$,
  consider the following interpolation rational function
  \begin{equation*}
    \Lambda(x)\coloneq \sum_{i=0}^{N-1} \ell^{(t)}_i(x)R(x_i),
  \end{equation*}
  where
  \begin{equation*}
    \ell^{(t)}_i(x)\coloneq \frac{\varphi^{(t)}_N(x) K^{(t)}_{N}(x_i) L_{N}(x_i)}{(x-x_i) K^{(t)}_{N}(x)L_{N}(x){\varphi'}^{(t)}_N(x_i)},\quad
    i=0, 1, \dots, N-1.
  \end{equation*}
  It is readily shown that
  \begin{equation*}
    \ell^{(t)}_i(x_j)=\delta_{i, j},\quad
    i, j=0, 1, \dots, N-1,
  \end{equation*}
  holds. Let
  \begin{equation*}
    Q(x) \coloneq R(x)-\Lambda(x).
  \end{equation*}
  Then, the numerator of $Q(x)$ is a polynomial
  that has zeros at $x_0, x_1, \dots, x_{N-1}$.
  Since $R(x) \in \mathcal D(\mathcal L^{(t)})$,
  there exists a polynomial $P(x)$ such that
  \begin{equation*}
    Q(x)=\frac{P(x)\varphi^{(t)}_N(x)}{K^{(t)}_{N}(x)L_{N}(x)}.
  \end{equation*}
  By the terminating condition \eqref{eq:rii-finite-orthogonality-2},
  we obtain
  \begin{align*}
    \mathcal L^{(t)}[R(x)]
    &=\mathcal L^{(t)}[\Lambda(x)]+\mathcal L^{(t)}[Q(x)]\\
    &=\sum_{i=0}^{N-1}\mathcal L^{(t)}[\ell^{(t)}_i(x)]R(x_i)+\mathcal L^{(t)}\left[\frac{P(x)\varphi^{(t)}_N(x)}{K^{(t)}_{N}(x)L_{N}(x)}\right]\\
    &=\sum_{i=0}^{N-1}\mathcal L^{(t)}[\ell^{(t)}_i(x)]R(x_i).
  \end{align*}
  Set $c^{(t)}_i\coloneq \mathcal L^{(t)}[\ell^{(t)}_i(x)]$, $i=0, 1, \dots, N-1$,
  then the proof is completed.
\end{proof}

Zhedanov \cite{zhedanov1999brf} derived a formula to calculate the constants $c^{(t)}_0, c^{(t)}_1, \dots, c^{(t)}_{N-1}$.
He used the second kind polynomials to derive it.
Here, we give a direct calculation to check his result.
From the quadrature formula \eqref{eq:rii-quadrature},
the moment is written as
\begin{equation*}
  \mu^{k, l, t}_m
  =\mathcal L^{(t)}\left[\frac{x^m}{K^{(t)}_k(x) L_l(x)}\right]
  =\sum_{i=0}^{N-1} \frac{c^{(t)}_i x_i^m}{K^{(t)}_k(x_i) L_l(x_i)}.
\end{equation*}
In the same manner as in Subsection~\ref{sec:op-finite-dimens-case},
we thus obtain the following formulae for $j=0, 1, \dots, N-1$:
\begin{gather*}
  \varphi^{(t)}_{N-1}(x_j)=\frac{1}{\tau^{N-1, N-1, t}_{N-1}}\prod_{\substack{i=0\\i \ne j}}^{N-1}\frac{c^{(t)}_i (x_j-x_i)}{K^{(t)}_{N-1}(x_i)L_{N-1}(x_i)}\prod_{\substack{0\le \nu_0<\nu_1\le N-1\\\nu_0 \ne j, \nu_1 \ne j}}(x_{\nu_1}-x_{\nu_0})^2,\\
         {\varphi'}^{(t)}_N(x_j)=\prod_{\substack{i=0\\ i\ne j}}^{N-1}(x_j-x_i),
\end{gather*}
and
\begin{gather*}
  \tau^{N-1, N-1, t}_N=\prod_{i=0}^{N-1}\frac{c^{(t)}_i}{K^{(t)}_{N-1}(x_i)L_{N-1}(x_i)}\prod_{\substack{0\le \nu_0<\nu_1\le N-1\\\nu_0 \ne j, \nu_1 \ne j}}(x_{\nu_1}-x_{\nu_0})^2,\\
  h^{(t)}_{N-1}=\frac{\tau^{N-1, N-1, t}_N}{\tau^{N-1, N-1, t}_{N-1}}.
\end{gather*}
Hence, we find the formula
\begin{equation*}
  c^{(t)}_i=\frac{h^{(t)}_{N-1} K^{(t)}_{N-1}(x_i)L_{N-1}(x_i)}{\varphi^{(t)}_{N-1}(x_i){\varphi'}^{(t)}_N(x_i)},\quad
  i=0, 1, \dots, N-1.
\end{equation*}

For the finite dimensional case, in the same manner as for the
monic finite orthogonal polynomials (see Subsection~\ref{sec:op-finite-dimens-case}),
the characteristic polynomial is invariant under the time evolution:
\begin{equation*}
  \varphi^{(t+1)}_N(x)=\varphi^{(t)}_N(x).
\end{equation*}
From the results in Subsection~\ref{sec:infin-dimens-case-rii},
we can thus see that
the solution to the initial value problem for
the monic type finite \Rii chain is given by
\begin{equation*}\label{eq:rii-sol-qe}
  q^{(t)}_n
  =(s^{(t)}-\kappa_{t+n})^{-1}\frac{\tau^{n, n, t}_n\tau^{n, n+1, t+1}_{n+1}}{\tau^{n-1, n, t+1}_n\tau^{n+1, n+1, t}_{n+1}},\quad
  e^{(t)}_n
  =(s^{(t)}-\kappa_{t+n})\frac{\tau^{n-1, n-1, t+1}_{n-1}\tau^{n+1, n, t}_{n+1}}{\tau^{n, n-1, t}_n\tau^{n, n, t+1}_n},
\end{equation*}
where, because the moment is concretely given by
\begin{equation*}
  \mu^{k, l, t}_m=\sum_{i=0}^{N-1}\frac{c^{(0)}_i x_i^m \prod_{j=0}^{t-1}(x_i-s^{(j)})}{K_{t+k}(x_i)L_l(x_i)},
\end{equation*}
the expanded form of the Hankel determinant is
\begin{equation*}
  \tau^{k, l, t}_n=\sum_{0 \le r_0<r_1<\dots<r_{n-1}\le N-1}\left(\prod_{i=0}^{n-1} \frac{c^{(0)}_{r_i}\prod_{j=0}^{t-1}(x_{r_i}-s^{(j)})}{K_{t+k}(x_{r_i})L_l(x_{r_i})}\prod_{0\le \nu_0<\nu_1\le n-1}(x_{r_{\nu_1}}-x_{r_{\nu_0}})^2\right).
\end{equation*}

The solution derived above yields the following theorem.

\begin{theorem}[Convergence theorem for the monic type finite \Rii chain]\label{thm:rii-converge}
  Suppose that all the generalized eigenvalues $x_0, x_1, \dots, x_{N-1}$
  of the initial tridiagonal matrix pencil $(A^{(0)}, B^{(0)})$ are
  real, simple and arranged
  in descending order as $x_0>x_1>\dots>x_{N-1}$.
  Choose the parameters $s^{(t)}$ and $\kappa_{t+N-1}$ as $x_{N-1}>s^{(t)}$ and
  $x_{N-1}\gg \kappa_{t+N-1}$ for all $t \ge 0$, respectively.
  Then, we have the asymptotics of the variables for $t \gg 0$:
  \begin{gather*}
    \begin{split}
      q^{(t)}_n&=\frac{x_n-s^{(t)}}{s^{(t)}-\kappa_{t+n}}\\
      &\qquad +O\left(\max\left\{
      \frac{\prod_{j=0}^t(x_n-s^{(j)})}{\prod_{j=0}^{t-1}(x_{n-1}-s^{(j)})}\frac{\prod_{j=0}^{t+n-1}(x_{n-1}-\kappa_{j})}{\prod_{j=0}^{t+n-1}(x_n-\kappa_j)},
      \frac{\prod_{j=0}^t(x_{n+1}-s^{(j)})}{\prod_{j=0}^{t-1}(x_{n}-s^{(j)})}\frac{\prod_{j=0}^{t+n-1}(x_{n}-\kappa_{j})}{\prod_{j=0}^{t+n-1}(x_{n+1}-\kappa_j)}
      \right\}\right),
    \end{split}\\
    e^{(t)}_n=O\left(\frac{\prod_{j=0}^{t-1}(x_n-s^{(j)})}{\prod_{j=0}^{t}(x_{n-1}-s^{(j)})}\frac{\prod_{j=0}^{t+n-1}(x_{n-1}-\kappa_{j})}{\prod_{j=0}^{t+n}(x_n-\kappa_j)}\right).
  \end{gather*}
  Hence, the variables $q^{(t)}_n$ and $e^{(t)}_n$ converge to $\frac{x_n-s^{(t)}}{s^{(t)}-\kappa_{t+n}}$ and $0$ as $t \to +\infty$, respectively.
\end{theorem}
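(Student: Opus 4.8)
The plan is to substitute the explicit Hankel-determinant solution---the formulas for $q^{(t)}_n$ and $e^{(t)}_n$ together with the expanded form of $\tau^{k, l, t}_n$---and to analyse the resulting ratios of finite sums as $t \to +\infty$. This mirrors the asymptotic argument used for the dqds algorithm in Section~\ref{sec:orth-polyn-nd}: each $\tau^{k, l, t}_n$ is a sum over index sets $0 \le r_0 < \dots < r_{n-1} \le N-1$, so the whole behaviour is governed by identifying the dominant index set and its first subdominant correction.

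First I would pin down the dominant term. The $t$-dependence of a single summand is carried by $\prod_i \bigl[\prod_{j=0}^{t-1}(x_{r_i}-s^{(j)})\bigr]$ divided by $\prod_i \bigl[\prod_{j=0}^{t+k-1}(x_{r_i}-\kappa_j)\bigr]$, whose growth rate is controlled by the factor $\frac{x-s^{(j)}}{x-\kappa_j}$ at $x=x_{r_i}$. A short computation gives
\begin{equation*}
  \frac{x_n-s^{(j)}}{x_{n-1}-s^{(j)}}\cdot\frac{x_{n-1}-\kappa_j}{x_n-\kappa_j}<1 \iff (x_{n-1}-x_n)(\kappa_j-s^{(j)})<0,
\end{equation*}
so under $x_{N-1}>s^{(t)}$ and $x_{N-1}\gg\kappa_{t+N-1}$ (which I read as $\kappa_j<s^{(j)}$) this factor is strictly less than one. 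Hence $x\mapsto\prod_j(x-s^{(j)})/\prod_j(x-\kappa_j)$ is increasing over the relevant range, and the summand is largest for the index set $\{0,1,\dots,n-1\}$ of the $n$ largest eigenvalues; I would write $T^{k, l, t}_n$ for the corresponding leading term.

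Next I would substitute the leading terms into the solution formulas. In $q^{(t)}_n=(s^{(t)}-\kappa_{t+n})^{-1}\frac{\tau^{n, n, t}_n\tau^{n, n+1, t+1}_{n+1}}{\tau^{n-1, n, t+1}_n\tau^{n+1, n+1, t}_{n+1}}$ the constants $c^{(0)}_i$, the Vandermonde blocks, and all $K$- and $L$-factors cancel in pairs, leaving only a single shift factor $(x_n-s^{(t)})$; this yields the main term $\frac{x_n-s^{(t)}}{s^{(t)}-\kappa_{t+n}}$. For $e^{(t)}_n=(s^{(t)}-\kappa_{t+n})\frac{\tau^{n-1, n-1, t+1}_{n-1}\tau^{n+1, n, t}_{n+1}}{\tau^{n, n-1, t}_n\tau^{n, n, t+1}_n}$ the cancellation is only partial: the surviving $t$-dependent factor is exactly
\begin{equation*}
  \frac{\prod_{j=0}^{t-1}(x_n-s^{(j)})}{\prod_{j=0}^{t}(x_{n-1}-s^{(j)})}\cdot\frac{\prod_{j=0}^{t+n-1}(x_{n-1}-\kappa_j)}{\prod_{j=0}^{t+n}(x_n-\kappa_j)},
\end{equation*}
multiplied by a $t$-independent constant built from $c^{(0)}_n/c^{(0)}_{n-1}$, a ratio of Vandermonde factors, and $L_{n-1}(x_{n-1})/L_n(x_n)$. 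Grouping this product index by index reproduces the monotonicity factor above, so it tends to $0$; this gives $e^{(t)}_n\to0$ and matches the stated order.

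Finally, for the error term in $q^{(t)}_n$ I would pass to the first subdominant correction of each Hankel sum: in the two $\tau_n$-factors the leading set $\{0,\dots,n-1\}$ is corrected by $\{0,\dots,n-2,n\}$, producing the first term of the max, while in the two $\tau_{n+1}$-factors the set $\{0,\dots,n\}$ is corrected by $\{0,\dots,n-1,n+1\}$, producing the second term. Since these relative corrections enter the logarithm of the ratio additively, the total error is $O$ of their maximum. The main obstacle is precisely this bookkeeping: one must check that no other index swap contributes at higher order, that the $t$-dependent parts of the two corrections reduce exactly to the two expressions inside the max (the $t$-independent prefactors being absorbed into the $O$-constant), and that the shift prefactor $(s^{(t)}-\kappa_{t+n})^{\pm1}$ stays bounded under the hypotheses. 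Convergence of both variables then follows, since each error term tends to $0$ by the same monotonicity estimate.
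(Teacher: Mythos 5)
Your proposal is correct and follows essentially the same route as the paper: the paper gives no separate proof of Theorem~\ref{thm:rii-converge} beyond the remark that it follows from the Hankel-determinant solution, i.e.\ precisely the dominant-term/first-correction analysis of the ratios of the expanded $\tau^{k,l,t}_n$ that you carry out, in parallel with the dqds asymptotics at the end of Section~\ref{sec:dqds-algorithm}. Your monotonicity criterion $(x_{n-1}-x_n)(\kappa_j-s^{(j)})<0$, the pairwise cancellations yielding the leading terms $\frac{x_n-s^{(t)}}{s^{(t)}-\kappa_{t+n}}$ and the stated decay rate for $e^{(t)}_n$, and the attribution of the two entries of the max to the swaps $n-1\to n$ in the $\tau_n$-factors and $n\to n+1$ in the $\tau_{n+1}$-factors all check out.
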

This theorem implies that, from \eqref{eq:rii},
the elements $v^{(t)}_n$ and $w^{(t)}_n$ of the tridiagonal matrices $A^{(t)}$
and $B^{(t)}$ converge to $x_n$ and $0$ as $t \to +\infty$, respectively.
Further, we can see that the parameters $s^{(t)}$ and $\kappa_{t+n}$ determine
the convergence speed;
the parameter $s^{(t)}$ works as the origin shift,
which is the same as for the dqds algorithm
(see the end of Subsection \ref{sec:dqds-algorithm}).

Next,
we discuss the matrix form of the monic type finite \Rii chain.
Introduce the rational functions defined by the following three-term
recurrence relation:
\begin{gather}
  \Phi^{(t)}_{-1}(x)\coloneq 0,\quad
  \Phi^{(t)}_0(x)\coloneq 1,\nonumber\\
  (x-\kappa_{t+n})\Phi^{(t)}_{n+1}(x)\coloneq
  -\left((1+w^{(t)}_n)x-v^{(t)}_n\right)\Phi^{(t)}_n(x)-w^{(t)}_n(x-\lambda_n)\Phi^{(t)}_{n-1}(x),\quad
  n=0, 1, \dots, N-1.\label{eq:rii-trr-rational}
\end{gather}
By comparing to the three-term recurrence relation \eqref{eq:rii-trr},
the relation
\begin{equation*}
  \Phi^{(t)}_n(x)=\frac{\varphi^{(t)}_n(x)}{K^{(t)}_n(x)},\quad
  n=0, 1, \dots, N,
\end{equation*}
is verified.
Let
\begin{equation*}
  \bm\Phi^{(t)}(x)\coloneq
  \begin{pmatrix}
    \Phi^{(t)}_0(x)\\
    \Phi^{(t)}_1(x)\\
    \vdots\\
    \Phi^{(t)}_{N-1}(x)
  \end{pmatrix},\quad
  \bm\Phi^{(t)}_N(x)\coloneq
  \begin{pmatrix}
    0\\
    \vdots\\
    0\\
    \Phi^{(t)}_{N}(x)
  \end{pmatrix}.
\end{equation*}
Then, the three-term recurrence relation \eqref{eq:rii-trr-rational} is
rewritten as
\begin{subequations}\label{eq:rii-matrix-lax}
  \begin{equation}
    A^{(t)}\bm\Phi^{(t)}(x)+\kappa_{t+N}\bm\Phi^{(t)}_N(x)=x\left(B^{(t)}\bm\Phi^{(t)}(x)+\bm\Phi^{(t)}_N(x)\right).
  \end{equation}
  Further, let $L^{(t)}_{\mathrm A}$, $L^{(t)}_{\mathrm B}$, and $R^{(t)}$ be
  bidiagonal matrices:
  \begin{gather*}
    L^{(t)}_{\mathrm A}\coloneq
    \begin{pmatrix}
      \kappa_{t}\\
      -\lambda_1 e^{(t)}_1 & \kappa_{t+1}\\
      & -\lambda_2 e^{(t)}_2 & \ddots\\
      && \ddots & \ddots\\
      &&& -\lambda_{N-1} e^{(t)}_{N-1} & \kappa_{t+N-1}
    \end{pmatrix},\quad
    L^{(t)}_{\mathrm B}\coloneq
    \begin{pmatrix}
      1\\
      -e^{(t)}_1 & 1\\
      & -e^{(t)}_2 & \ddots\\
      && \ddots & \ddots\\
      &&& -e^{(t)}_{N-1} & 1
    \end{pmatrix},\\
    R^{(t)}\coloneq
    \begin{pmatrix}
      q^{(t)}_0 & -1\\
      & q^{(t)}_1 & -1\\
      && \ddots & \ddots\\
      &&& \ddots & -1\\
      &&&& q^{(t)}_{N-1}
    \end{pmatrix},
  \end{gather*}
  and $D_{\mathrm q}^{(t)}$, $D_{\mathrm e}^{(t)}$ and $\hat D_{\mathrm e}^{(t)}$
  be diagonal matrices:
  \begin{gather*}
    D_{\mathrm q}^{(t)}\coloneq \diag\left(1+q^{(t)}_0, 1+q^{(t)}_1, \dots, 1+q^{(t)}_{N-1}\right),\\
    D_{\mathrm e}^{(t)}\coloneq \diag\left(1, 1+e^{(t)}_1, \dots, 1+e^{(t)}_{N-1}\right),\quad
    \hat D_{\mathrm e}^{(t)}\coloneq \diag\left(1+e^{(t)}_1, \dots, 1+e^{(t)}_{N-1}, 1\right).
  \end{gather*}
  Then, the spectral transformations \eqref{eq:rii-spectral-transform}
  are written in terms of the rational functions $\{\Phi^{(t)}_n(x)\}_{n=0}^N$ as
  \begin{gather}
    (x-s^{(t)})D^{(t)}_{\mathrm q}\bm\Phi^{(t+1)}(x)=(x-\kappa_t)\left(R^{(t)}\bm\Phi^{(t)}(x)-\bm\Phi^{(t)}_N(x)\right),\\
    D_{\mathrm e}^{(t)}\bm\Phi^{(t)}(x)=(x-\kappa_t)^{-1}\left(xL^{(t)}_{\mathrm B}-L^{(t)}_{\mathrm A}\right)\bm\Phi^{(t+1)}(x).
  \end{gather}
\end{subequations}
Equations \eqref{eq:rii-matrix-lax} yield
\begin{align*}
  &\phantom{{}={}}\,x\left(B^{(t+1)}\bm\Phi^{(t+1)}(x)+\bm\Phi^{(t+1)}_N(x)\right)-A^{(t+1)}\bm\Phi^{(t+1)}(x)-\kappa_{t+N}\bm\Phi^{(t+1)}_N(x)\\
  &=
  \begin{multlined}[t]
    x\left(\left(-D_{\mathrm q}^{(t+1)}L_{\mathrm B}^{(t+1)}(D_{\mathrm q}^{(t+1)})^{-1}R^{(t+1)}+D_{\mathrm q}^{(t+1)}D_{\mathrm e}^{(t+1)}\right)\bm\Phi^{(t+1)}(x)+\bm\Phi^{(t+1)}_N(x)\right)\\
    -\left(-D_{\mathrm q}^{(t+1)}L_{\mathrm A}^{(t+1)}(D_{\mathrm q}^{(t+1)})^{-1}R^{(t+1)}+s^{(t+1)}D_{\mathrm q}^{(t+1)}D_{\mathrm e}^{(t+1)}\right)\bm\Phi^{(t+1)}(x)-\kappa_{t+N}\bm\Phi^{(t+1)}_N(x)
  \end{multlined}\\
  &=
  \begin{multlined}[t]
    x\left(\left(-\hat D_{\mathrm e}^{(t)}R^{(t)}(D_{\mathrm e}^{(t)})^{-1}L_{\mathrm B}^{(t)}+D_{\mathrm q}^{(t)}\hat D_{\mathrm e}^{(t)}\right)\bm\Phi^{(t+1)}(x)+\bm\Phi^{(t+1)}_N(x)\right)\\
    -\left(-\hat D_{\mathrm e}^{(t)}R^{(t)}(D_{\mathrm e}^{(t)})^{-1}L_{\mathrm A}^{(t)}+s^{(t)}D_{\mathrm q}^{(t)}\hat D_{\mathrm e}^{(t)}\right)\bm\Phi^{(t+1)}(x)-\kappa_{t+N}\bm\Phi^{(t+1)}_N(x)
  \end{multlined}\\
  &=\bm 0.
\end{align*}
Hence, the compatibility condition for \eqref{eq:rii-matrix-lax}, i.e.
the matrix form of the monic type finite \Rii chain, is given by
\begin{align*}
  A^{(t+1)}
  &=-D_{\mathrm q}^{(t+1)}L_{\mathrm A}^{(t+1)}(D_{\mathrm q}^{(t+1)})^{-1}R^{(t+1)}+s^{(t+1)}D_{\mathrm q}^{(t+1)}D_{\mathrm e}^{(t+1)}\\
  &=-\hat D_{\mathrm e}^{(t)}R^{(t)}(D_{\mathrm e}^{(t)})^{-1}L_{\mathrm A}^{(t)}+s^{(t)}D_{\mathrm q}^{(t)}\hat D_{\mathrm e}^{(t)},\\
  B^{(t+1)}
  &=-D_{\mathrm q}^{(t+1)}L_{\mathrm B}^{(t+1)}(D_{\mathrm q}^{(t+1)})^{-1}R^{(t+1)}+D_{\mathrm q}^{(t+1)}D_{\mathrm e}^{(t+1)}\\
  &=-\hat D_{\mathrm e}^{(t)}R^{(t)}(D_{\mathrm e}^{(t)})^{-1}L_{\mathrm B}^{(t)}+D_{\mathrm q}^{(t)}\hat D_{\mathrm e}^{(t)}.
\end{align*}
This leads to
\begin{gather*}
  A^{(t+1)}=\hat D_{\mathrm e}^{(t)}R^{(t)}(D_{\mathrm q}^{(t)}D_{\mathrm e}^{(t)})^{-1}A^{(t)}(R^{(t)})^{-1}D_{\mathrm q}^{(t)},\\
  B^{(t+1)}=\hat D_{\mathrm e}^{(t)}R^{(t)}(D_{\mathrm q}^{(t)}D_{\mathrm e}^{(t)})^{-1}B^{(t)}(R^{(t)})^{-1}D_{\mathrm q}^{(t)},
\end{gather*}
and
\begin{equation*}
  xB^{(t+1)}-A^{(t+1)}=\hat D_{\mathrm e}^{(t)}R^{(t)}(D_{\mathrm q}^{(t)}D_{\mathrm e}^{(t)})^{-1}\left(xB^{(t)}-A^{(t)}\right)(R^{(t)})^{-1}D_{\mathrm q}^{(t)}.
\end{equation*}
The last equation implies that the generalized eigenvalues
of the tridiagonal matrix pencil $(A^{(t)}, B^{(t)})$ are
conserved under the time evolution.

\section{Generalized eigenvalue algorithm}
\label{sec:gener-eigenv-algor}

\subsection{Subtraction-free form of the monic type \Rii chain}

In Section~\ref{sec:rii-polynomials-rii}, we have presented
the convergence theorem for the monic type finite \Rii chain
(Theorem~\ref{thm:rii-converge}).
This theorem allows us to design a generalized eigenvalue algorithm
for tridiagonal matrix pencils.

Consider a pair of tridiagonal matrices of order $N$ as input:
\begin{equation}\label{eq:rii-input}
  A=
  \begin{pmatrix}
    a_{0, 0} & a_{0, 1}\\
    a_{1, 0} & a_{1, 1} & a_{1, 2}\\
    & a_{2, 1} & \ddots & \ddots\\
    && \ddots & \ddots & a_{N-2, N-1}\\
    &&& a_{N-1, N-2} & a_{N-1, N-1}
  \end{pmatrix},\quad
  B=
  \begin{pmatrix}
    b_{0, 0} & b_{0, 1}\\
    b_{1, 0} & b_{1, 1} & b_{1, 2}\\
    & b_{2, 1} & \ddots & \ddots\\
    && \ddots & \ddots & b_{N-2, N-1}\\
    &&& b_{N-1, N-2} & b_{N-1, N-1}
  \end{pmatrix}.
\end{equation}
Suppose that all the subdiagonal elements $b_{0, 1}, b_{1, 2}, \dots, b_{N-2, N-1}$
and $b_{1, 0}, b_{2, 0}, \dots, b_{N-1, N-2}$ of the matrix $B$ are nonzero,
and all the leading principal minors of the matrix $B$ are nonzero.
Then, the transformation
\begin{equation*}
  A^{(0)}\coloneq V_1U A U^{-1}V_2,\quad
  B^{(0)}\coloneq V_1U B U^{-1}V_2
\end{equation*}
gives the initial matrix pencil of the form \eqref{eq:rii-tridiagonal-finite}
for the monic type finite \Rii chain, where
\begin{gather*}
  U\coloneq \diag(1, b_{0, 1}, b_{0, 1}b_{1, 2}, \dots, b_{0, 1}b_{1, 2}\dots b_{N-2, N-1}),\\
  V_1\coloneq \diag\left((\det B_1)^{-1}, (\det B_2)^{-1}, \dots, (\det B_N)^{-1}\right),\quad
  V_2\coloneq \diag(1, \det B_1, \det B_2, \dots, \det B_{N-1}),
\end{gather*}
and $B_n$ is the $n$-th order leading principal submatrix of the matrix $B$.
Namely, the elements of $A^{(0)}$ and $B^{(0)}$ are computed by
\begin{equation}\label{eq:rii-transform-init}
  v^{(0)}_n \coloneq a_{n, n}\frac{\det B_n}{\det B_{n+1}},\quad
  w^{(0)}_n \coloneq b_{n-1, n}b_{n, n-1}\frac{\det B_{n-1}}{\det B_{n+1}},\quad
  \kappa_n\coloneq \frac{a_{n, n+1}}{b_{n, n+1}},\quad
  \lambda_n\coloneq \frac{a_{n, n-1}}{b_{n, n-1}}.
\end{equation}
Note that, if $n$ is large, an overflow may occur when one computes
$\det B_n$ directly.
The values $\frac{\det B_n}{\det B_{n+1}}$ and $\frac{\det B_{n-1}}{\det B_{n+1}}$
should be computed by the LU decomposition.
Next, by the relation \eqref{eq:rii},
``decompose'' the matrix pencil $(A^{(0)}, B^{(0)})$
to the variables of the monic type finite \Rii chain:
\begin{subequations}\label{eq:rii-init-values}
  \begin{alignat}{2}
    &e^{(0)}_0\coloneq 0,\quad e^{(0)}_N\coloneq 0,\\
    &\tilde e^{(0)}_n\coloneq \frac{w^{(0)}_n}{q^{(0)}_{n-1}},\quad
    e^{(0)}_n\coloneq\tilde e^{(0)}_n\frac{1+q^{(0)}_{n-1}}{1+q^{(0)}_n},\quad
    &\quad&n=1, 2, \dots, N-1,\\
    &q^{(0)}_n\coloneq \frac{v^{(0)}_n-s^{(0)}(1+w^{(0)}_n)-(s^{(0)}-\lambda_n)\tilde e^{(0)}_n}{s^{(0)}-\kappa_n},\quad
    &&n=0, 1, \dots, N-1.
  \end{alignat}
\end{subequations}
Notice that
the initial matrix pencil $(A^{(0)}, B^{(0)})$ does not fix
the values of the parameters $s^{(0)}$ and $\kappa_{N-1}$.
We must choose the parameters $s^{(0)}$ and $\kappa_{N-1}$ appropriately.
We will discuss how to choose the parameters in the end of this subsection.
After that, compute the time evolution of the monic type finite \Rii chain by using
\eqref{eq:rii-finite} iteratively; i.e., for each $t \ge 0$, compute
\begin{subequations}
  \begin{gather}
    e^{(t+1)}_0\coloneq 0,\quad e^{(t+1)}_N \coloneq 0,\\
    e^{(t+1)}_n\coloneq e^{(t)}_n \frac{q^{(t)}_n}{q^{(t+1)}_{n-1}}\frac{1+q^{(t+1)}_{n-1}}{1+q^{(t+1)}_n}\frac{1+e^{(t)}_{n+1}}{1+e^{(t)}_n},\quad
    n=1, 2, \dots, N-1,\\
    \begin{multlined}[b]
      q^{(t+1)}_n\coloneq (s^{(t+1)}-\kappa_{t+n+1})^{-1}\Bigg((s^{(t+1)}-\kappa_{t+n})q^{(t)}_n\frac{1+e^{(t)}_{n+1}}{1+e^{(t)}_n}-(s^{(t+1)}-\lambda_n)e^{(t+1)}_n\frac{1+q^{(t+1)}_n}{1+q^{(t+1)}_{n-1}}\\
      +(s^{(t+1)}-\lambda_{n+1})e^{(t)}_{n+1}-(s^{(t+1)}-s^{(t)})(1+q^{(t)}_n)(1+e^{(t)}_{n+1})\Bigg),
    \end{multlined}\nonumber\\
    \hspace{33em}n=0, 1, \dots, N-1.\label{eq:rii-time-evol-q}
  \end{gather}
\end{subequations}
Here, we also have to choose the parameters $s^{(t+1)}$ and $\kappa_{t+N}$
for computing the above recurrence equations.
From the results in Subsection~\ref{sec:finite-dimens-case},
we can see that if the absolute values of all the subdiagonal elements
$\lambda_n w^{(t)}_n$ and $w^{(t)}_n$ of
the matrix pencil $(A^{(t)}, B^{(t)})$ become
sufficiently small at a time $t$, then the values
$(s^{(t)}-\kappa_{t+n})q^{(t)}_n+s^{(t)}$ give
the generalized eigenvalues of the initial tridiagonal matrix pencil $(A, B)$.
In general, however, equation \eqref{eq:rii-time-evol-q} requires subtraction
operations, which may degrade the accuracy by the loss of significant digits.
A subtraction-free form of the monic type finite \Rii chain may resolve the problem.

Let us introduce an auxiliary variable
\begin{equation*}
  d^{(t+1)}_n=\frac{(s^{(t+1)}-\kappa_{t+n+1})q^{(t+1)}_n-(s^{(t+1)}-\lambda_{n+1})e^{(t)}_{n+1}}{1+e^{(t)}_{n+1}},\quad
  n=0, 1, \dots, N-1.
\end{equation*}
This is an analogue of the auxiliary variable \eqref{eq:dqds-aux} introduced
in the dqds algorithm.
Then, the subtraction-free form is derived as
\begin{subequations}\label{eq:rii-subtraction-free}
  \begin{gather}
    \begin{multlined}[b]
      d^{(t+1)}_0\coloneq(s^{(t)}-\kappa_t)q^{(t)}_0-(s^{(t+1)}-s^{(t)}),\quad
      d^{(t+1)}_n\coloneq d^{(t+1)}_{n-1}\frac{q^{(t)}_n}{q^{(t+1)}_{n-1}}-(s^{(t+1)}-s^{(t)})(1+q^{(t)}_n),\\
      n=1, 2, \dots, N-1,
    \end{multlined}\label{eq:rii-subtraction-free-d}\\
    q^{(t+1)}_n
    \coloneq\frac{(s^{(t+1)}-\lambda_{n+1})e^{(t)}_{n+1}+d^{(t+1)}_n(1+e^{(t)}_{n+1})}{s^{(t+1)}-\kappa_{t+n+1}},\quad
    n=0, 1, \dots, N-1,\\
    e^{(t+1)}_0\coloneq0,\quad
    e^{(t+1)}_n
    \coloneq e^{(t)}_n\frac{q^{(t)}_n}{q^{(t+1)}_{n-1}}\frac{1+q^{(t+1)}_{n-1}}{1+q^{(t+1)}_n}\frac{1+e^{(t)}_{n+1}}{1+e^{(t)}_n},\quad n=1, 2, \dots, N-1,\quad
    e^{(t+1)}_N\coloneq0.
  \end{gather}
\end{subequations}

From the spectral transformations \eqref{eq:rii-spectral-transform},
we have
\begin{gather*}
  -(1+e^{(t-1)}_{n+1})\varphi^{(t-1)}_{n+1}(s^{(t)})=\left((s^{(t)}-\kappa_{t+n})q^{(t)}_n-(s^{(t)}-\lambda_{n+1})e^{(t-1)}_{n+1}\right)\varphi^{(t)}_n(s^{(t)})\\
  \Rightarrow\quad
  d^{(t)}_n
  =-\frac{\varphi^{(t-1)}_{n+1}(s^{(t)})}{\varphi^{(t)}_n(s^{(t)})}
  =\frac{\tau^{n, n, t}_n\sigma^{n, n+1, t}_{n+1}}{\tau^{n+1, n+1, t-1}_{n+1}\tau^{n-1, n, t+1}_n},
\end{gather*}
where
\begin{equation*}
  \sigma^{k, l, t}_{n}\coloneq |\mu^{k+1, l, t-1}_{i+j+1}-s^{(t)}\mu^{k+1, l, t-1}_{i+j}|_{0 \le i, j \le n-1}.
\end{equation*}
In addition, we already have the expression of $q^{(t)}_n$ \eqref{eq:rii-q}.
Hence, we obtain a sufficient condition for computing
the recurrence equation \eqref{eq:rii-subtraction-free}
without subtraction operations except the shift terms
in \eqref{eq:rii-subtraction-free-d}: for all $n$ and $t$,
\begin{subequations}\label{eq:rii-positive-condition}
  \begin{gather}
    w^{(0)}_n>0,\label{eq:rii-positive-condition-1}\\
    (-1)^n\varphi^{(t)}_n(s^{(t)})=\det(A^{(t)}_n-s^{(t)}B^{(t)}_n)>0,\label{eq:rii-positive-condition-2}\\
    (-1)^n\varphi^{(t)}_n(s^{(t+1)})=\det(A^{(t)}_n-s^{(t+1)}B^{(t)}_n)>0,\label{eq:rii-positive-condition-3}\\
    s^{(t)}>\kappa_{t+n},\quad s^{(t)}>\lambda_n.
  \end{gather}
\end{subequations}
By \eqref{eq:rii-transform-init},
if the input tridiagonal matrix $B$ is a real symmetric positive (or negative)
definite matrix, then the condition \eqref{eq:rii-positive-condition-1} is satisfied.
Further, assume that
the generalized eigenvalues $x_0, x_1, \dots, x_{N-1}$ of the input tridiagonal
matrix pencil $(A, B)$ are all real and simple,
the matrix $A$ is a real matrix
and the conditions $w^{(t)}_n>0$ and $\kappa_{t+n-1}=\lambda_n$ are
satisfied for $n=1, 2, \dots, N-1$ at some time $t$.
Then, it is shown that if the parameter $s^{(t)}$ is chosen as
$s^{(t)}<\min\{x_0, x_1, \dots, x_{N-1}\}$,
the condition \eqref{eq:rii-positive-condition-2} is satisfied.
The condition \eqref{eq:rii-positive-condition-3} is also satisfied with
$s^{(t+1)}<\min\{x_0, x_1, \dots, x_{N-1}\}$.
From Theorem~\ref{thm:rii-converge},
if $s^{(t)}$ is chosen as close as possible to $\min\{x_0, x_1, \dots, x_{N-1}\}$
under the conditions \eqref{eq:rii-positive-condition},
the convergence speed is accelerated.

By summarizing this subsection,
Algorithm~\ref{alg:riigev} is proposed as a new generalized eigenvalue algorithm
for tridiagonal matrix pencils based on the monic type finite \Rii chain.

\begin{algorithm}[htbp]
  \caption{The proposed generalized eigenvalue algorithm based on the monic type finite \Rii chain}\label{alg:riigev}
  \begin{algorithmic}[1]
    \Function{GEVRII}{$A, B$}
    \Comment{$A$ and $B$ are tridiagonal matrices of the form \eqref{eq:rii-input}}
    \State Compute $\{v^{(0)}_n\}_{n=0}^{N-1}$, $\{w^{(0)}_n\}_{n=1}^{N-1}$,
    $\{\kappa_n\}_{n=0}^{N-2}$, and $\{\lambda_n\}_{n=1}^{N-1}$ by \eqref{eq:rii-transform-init}
    \State Set the parameters $s^{(0)}$ and $\kappa_{N-1}$ appropriately
    \Comment{See Theorem~\ref{thm:rii-converge} and the condition \eqref{eq:rii-positive-condition}}
    \State Compute $\{q^{(0)}_n\}_{n=0}^{N-1}$ and $\{e^{(0)}_n\}_{n=0}^N$ by \eqref{eq:rii-init-values}
    \State $t\coloneq 0$
    \Repeat
    \State Set the parameters $s^{(t+1)}$ and $\kappa_{t+N}$ appropriately
    \Comment{See Theorem~\ref{thm:rii-converge} and the condition \eqref{eq:rii-positive-condition}}
    \State Compute $\{q^{(t+1)}_n\}_{n=0}^{N-1}$ and $\{e^{(t+1)}_n\}_{n=0}^N$ by \eqref{eq:rii-subtraction-free}
    \State $t \coloneq t+1$
    \For{$n=1, 2, \dots, N-1$}
    \State$w^{(t)}_n\coloneq q^{(t)}_{n-1}e^{(t)}_n\frac{1+q^{(t)}_n}{1+q^{(t)}_{n-1}}$
    \EndFor
    \Until the absolute values of $w^{(t)}_n$ and $\lambda_n w^{(t)}_n$
    are sufficiently small for all $n=1, 2, \dots, N-1$
    \State \Return $\{(s^{(t)}-\kappa_{t+n})q^{(t)}_n+s^{(t)}\}_{n=0}^{N-1}$
    \EndFunction
  \end{algorithmic}
\end{algorithm}

\subsection{Numerical examples}

We shall give numerical examples.
To construct test problems with known generalized eigenvalues,
let us consider the monic finite orthogonal polynomials $\{p_n(x)\}_{n=0}^{N}$
defined by
\begin{gather*}
  p_{n+1}(x)\coloneq \left(x-\frac{N-1}{2}\right)p_n(x)-\frac{n(N-n)}{4}p_{n-1}(x),\quad n=0, 1, \dots, N-1,
\end{gather*}
with $p_{-1}(x)\coloneq 0$ and $p_0(x)\coloneq 1$.
The polynomials $\{p_n(x)\}_{n=0}^N$ are the monic Krawtchouk polynomials
with a special parameter and
it is well known that the Krawtchouk polynomials are orthogonal
on $x=0, 1, \dots, N-1$ with respect to the binomial distribution
\cite{koekoek1998ash}.
This means that the tridiagonal matrix of order $N$
\begin{equation*}
  \tilde K_N\coloneq
  \begin{pmatrix}
    (N-1)/2 & 1\\
    (N-1)/4 & (N-1)/2 & 1\\
    & 2(N-2)/4 & (N-1)/2 & 1\\
    && 3(N-3)/4 & \ddots & \ddots\\
    &&& \ddots & \ddots & 1\\
    &&&& (N-1)/4 & (N-1)/2
  \end{pmatrix} \in \mathbb R^{N\times N}
\end{equation*}
has the eigenvalues $0, 1, \dots, N-1$.
The symmetric tridiagonal matrix
\begin{equation*}
  K_N\coloneq
  \begin{pmatrix}
    (N-1)/2 & \sqrt{(N-1)/4}\\
    \sqrt{(N-1)/4} & (N-1)/2 & \sqrt{2(N-2)/4}\\
    & \sqrt{2(N-2)/4} & (N-1)/2 & \sqrt{3(N-3)/4}\\
    && \sqrt{3(N-3)/4} & \ddots & \ddots\\
    &&& \ddots & \ddots & \sqrt{(N-1)/4}\\
    &&&& \sqrt{(N-1)/4} & (N-1)/2
  \end{pmatrix} \in \mathbb R^{N\times N}
\end{equation*}
is similar to $\tilde K_N$.
Hence, it is readily shown that the tridiagonal matrix pencil
$(K_N+2I_N, K_N+I_N)$ has the generalized eigenvalues $(n+1)/n$, $n=1, 2, \dots, N$.

The following experiments were run on a Linux PC with
kernel 3.7.4 and gcc 4.7.2 on
Intel Core i5 760 2.80 GHz CPU and 4 GB memory.
All the computations were performed in double precision
and the stopping criterion (line 13 in Algorithm~\ref{alg:riigev}) was
$|w^{(t)}_n|<10^{-20}$ and $|\lambda_n w^{(t)}_n|<10^{-20}$
for all $n=1, 2, \dots, N-1$.

\begin{example}
  The first example is the case of $N=5$:
  \begin{equation*}
    K_5=
    \begin{pmatrix}
      2 & 1\\
      1 & 2 & \sqrt{3/2}\\
      & \sqrt{3/2} & 2 & \sqrt{3/2}\\
      && \sqrt{3/2} & 2 & 1\\
      &&& 1 & 2
    \end{pmatrix}.
  \end{equation*}
  The generalized eigenvalues of the matrix pencil $(K_5+2I_5, K_5+I_5)$
  are $2$, $3/2$, $4/3$, $5/4$, and $6/5$.
  By this example, we will observe the behaviour of the variables
  of the monic type finite \Rii chain and
  confirm that the proposed algorithm computes the generalized eigenvalues
  of a given matrix pencil and its convergence speed depends on the
  parameters $s^{(t)}$ and $\kappa_{t+n}$.

  \begin{figure}[tbp]
    \begin{center}
      \includegraphics[scale=.8]{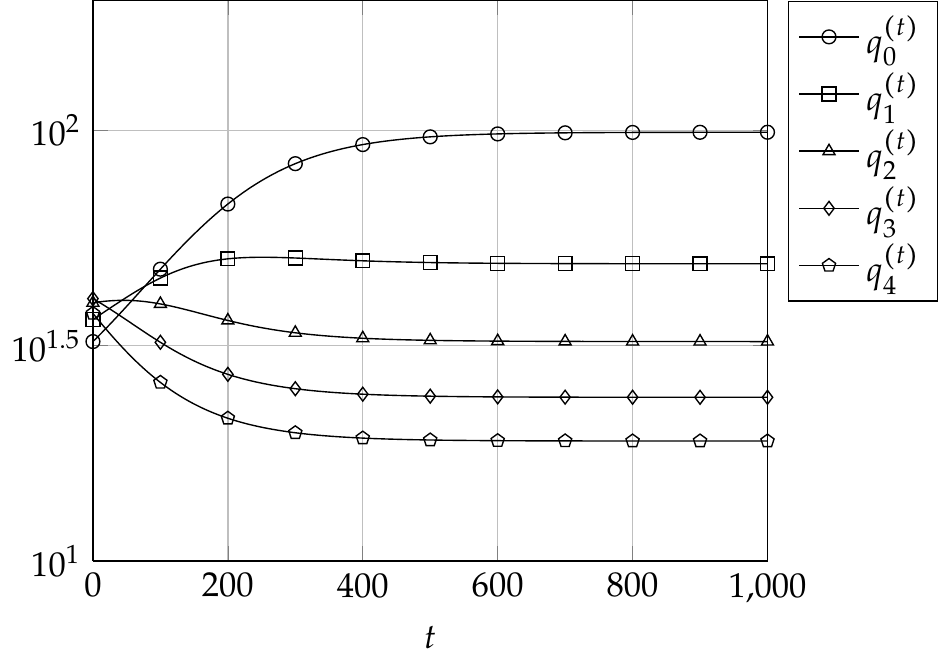}
      \quad
      \includegraphics[scale=.8]{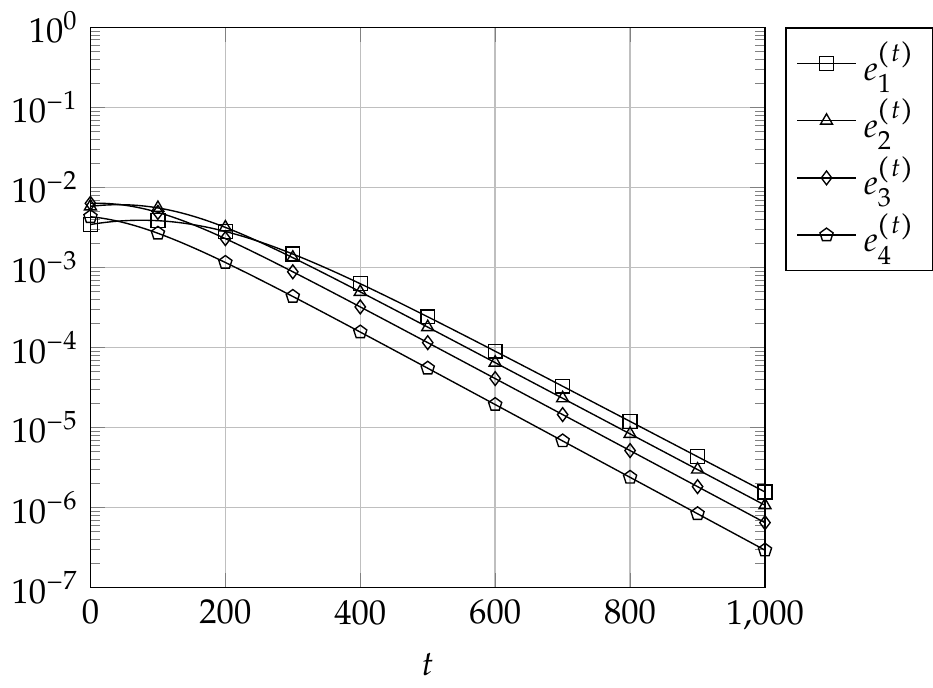}\\[2em]
      \includegraphics[scale=.8]{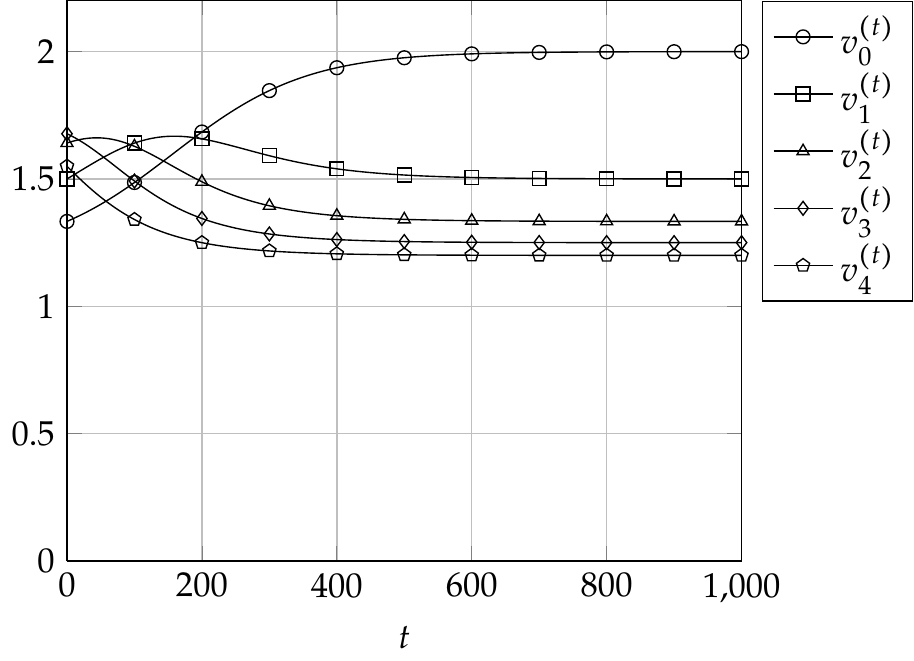}
      \quad
      \includegraphics[scale=.8]{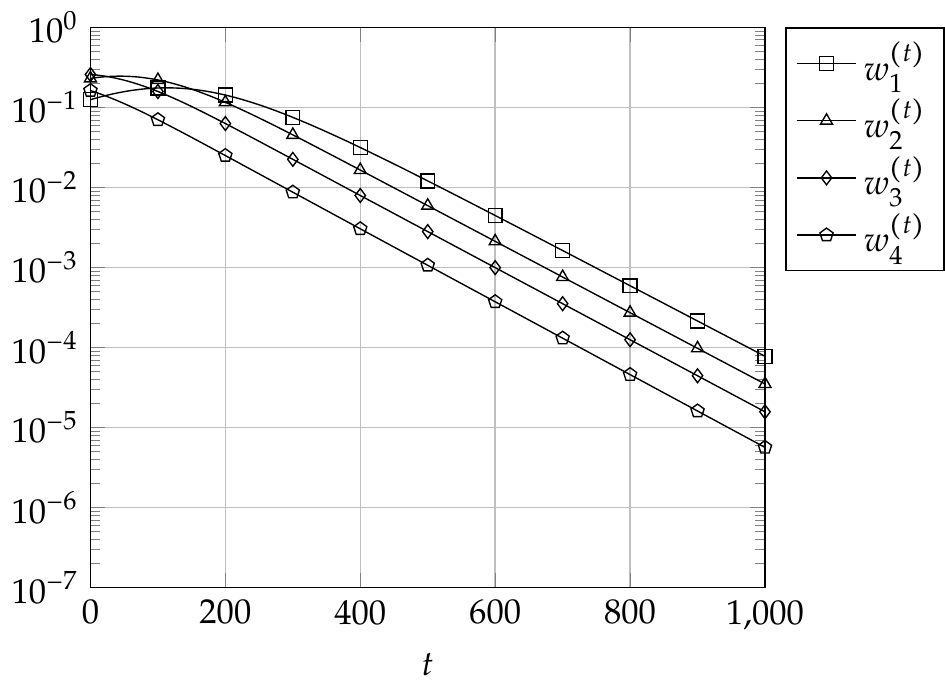}
    \end{center}
    \caption{The behaviour of the variables of the monic type finite \Rii chain
      for the input tridiagonal matrix pencil $(K_5+2I_5, K_5+I_5)$
      with the parameters $s^{(t)}=1.01$ for all $t \ge 0$ and $\kappa_n=1$ for all $n \ge 4$.}
    \label{fig:s=1.01}
  \end{figure}

  Figure~\ref{fig:s=1.01} shows the result with the parameters
  $s^{(t)}=1.01$ for all $t \ge 0$ and $\kappa_n=1$ for all $n \ge 4$,
  where $q^{(t)}_n$ and $e^{(t)}_n$ are the variables of the monic type
  \Rii chain, $v^{(t)}_n$ are the diagonal elements of $A^{(t)}$,
  and $w^{(t)}_n$ are the subdiagonal elements of $B^{(t)}$
  (see equations \eqref{eq:rii-v} and \eqref{eq:rii-w}).
  We can confirm that $v^{(t)}_n$ and $w^{(t)}_n$ converge linearly to
  the eigenvalues and zero, respectively.
  Since the shift parameter $s^{(t)}$ is not so close
  to the minimal eigenvalue $6/5=1.2$,
  the stopping criterion is satisfied at $t=4605$.

  \begin{figure}[tbp]
    \begin{center}
      \includegraphics[scale=.8]{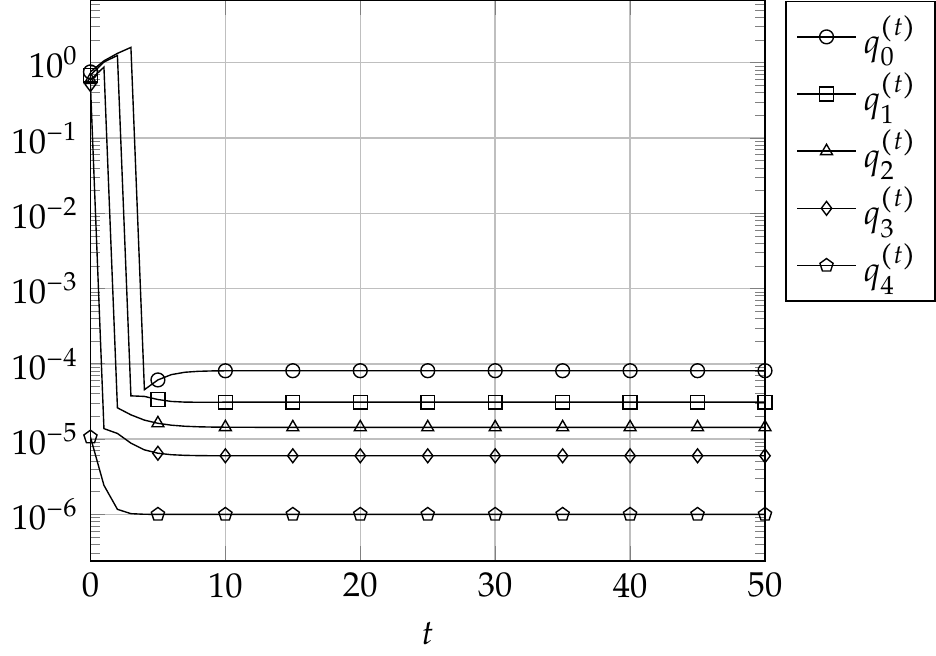}
      \quad
      \includegraphics[scale=.8]{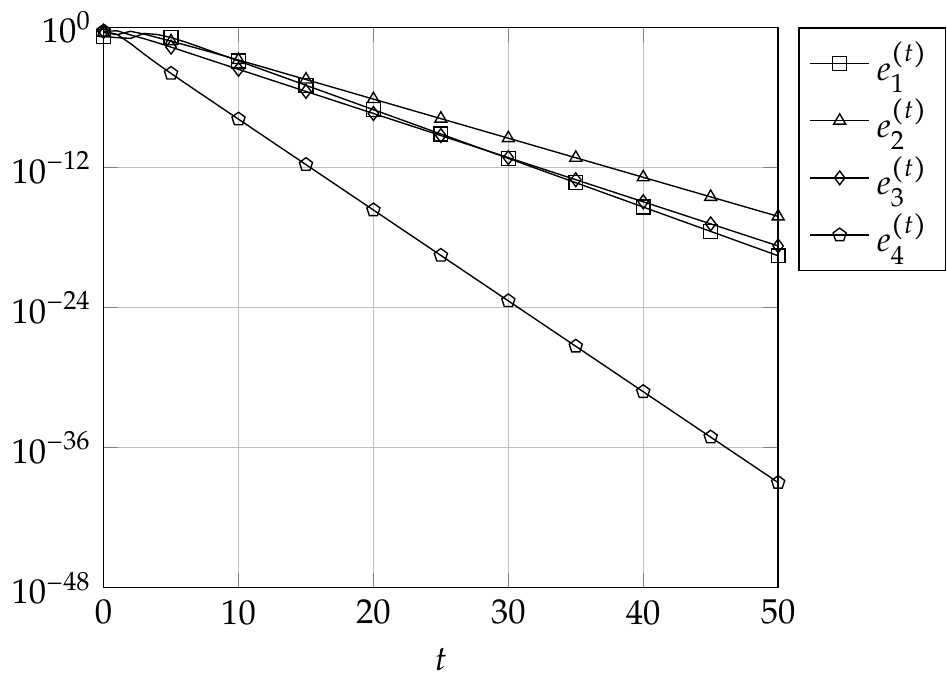}\\[2em]
      \includegraphics[scale=.8]{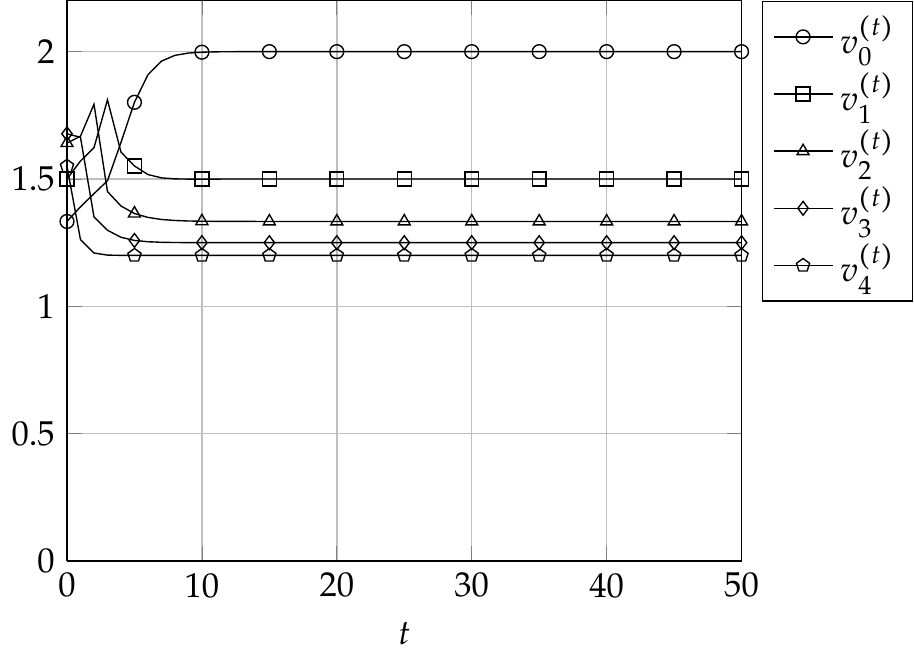}
      \quad
      \includegraphics[scale=.8]{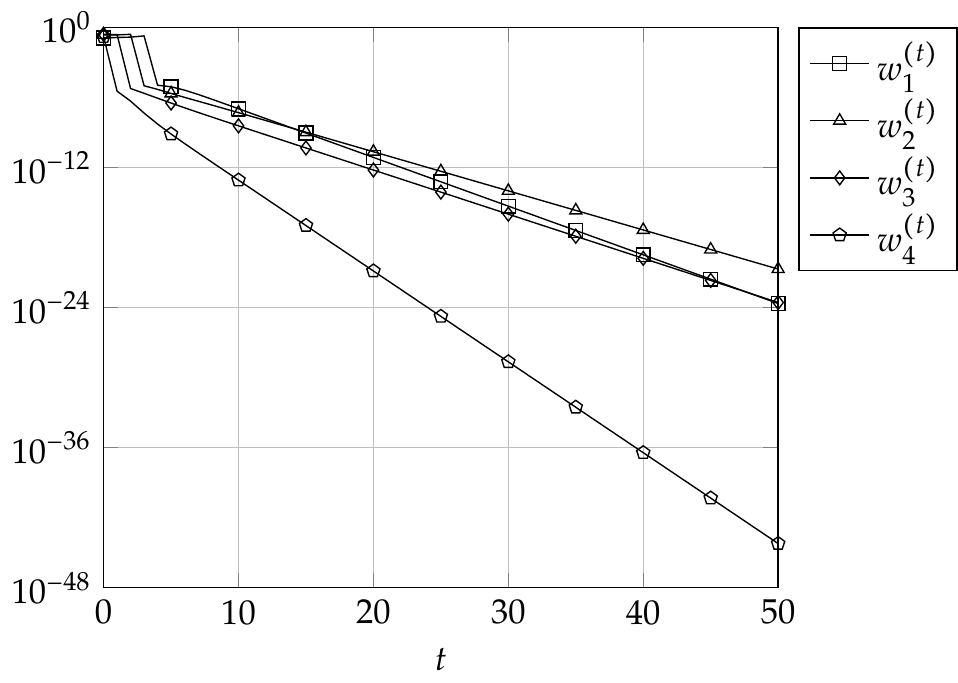}
    \end{center}
    \caption{The behaviour of the variables of the monic type finite \Rii chain
      for the input tridiagonal matrix pencil $(K_5+2I_5, K_5+I_5)$
      with the parameters $s^{(t)}=1.19$ for all $t \ge 0$ and $\kappa_{n}=-10000$
      for all $n \ge 4$.}
    \label{fig:s=1.19}
  \end{figure}
  \begin{table}[tbp]
    \caption{The eigenvalues computed by Algorithm~\ref{alg:riigev}.
      The parameters are $s^{(t)}=1.19$ and $\kappa_{n}=-10000$ for all $t \ge 0$
      and $n \ge 4$.}
    \label{tab:computed-eigenvalues}
    \begin{center}
      \begin{tabular}{cc}
        \hline
        Computed eigenvalues & True eigenvalues\\\hline
        1.9999999999999998&	2.0000000000000000\\
        1.4999999999999991&	1.5000000000000000\\
        1.3333333333333335&	1.3333333333333333\\
        1.2500000000000000&	1.2500000000000000\\
        1.2000000000000000&	1.2000000000000000\\
        \hline
      \end{tabular}
    \end{center}
  \end{table}

  Figure~\ref{fig:s=1.19} shows the result with more suitable parameters:
  $s^{(t)}=1.19$ for all $t \ge 0$ and $\kappa_{n}=-10000$ for all $n \ge 4$.
  The convergence speed is much faster than the former example;
  the stopping criterion is satisfied at $t=48$.
  Table~\ref{tab:computed-eigenvalues} shows the computed eigenvalues.
\end{example}

\begin{example}\label{ex:large}
  Next, the test cases for $N=512$, $1024$, $2048$, $4096$, $8192$ were computed
  by two methods.
  By these examples, we will compare the computation time
  and the accuracy of the proposed algorithm
  with a routine called \texttt{DSYGV} in LAPACK 3.4.2 \cite{lapack}.
  \texttt{DSYGV} computes the generalized eigenvalues of a given matrix pencil
  $(A, B)$ in double precision,
  where $A$ is real symmetric and $B$ is real symmetric and positive definite.
  Internally,
  \texttt{DSYGV} computes
  the Cholesky factorization $B=LL^{\mathrm{T}}$, where $L$ is
  a lower triangular matrix, transforms the generalized eigenvalue problem
  $A\bm \varphi=xB \bm \varphi$ to the eigenvalue problem
  $L^{-1}AL^{-\mathrm{T}}(L^{\mathrm{T}} \bm\varphi)=x(L^{\mathrm{T}} \bm \varphi)$
  and solves the eigenvalue problem.
  We should remark that, even if $A$ and $B$ are both tridiagonal,
  $L^{-1}AL^{-\mathrm{T}}$ is a dense matrix in general.
  Hence, we expect that \texttt{DSYGV} spends much time for
  large problems.
  On the other hand, the proposed algorithm preserves the tridiagonal form
  of the matrices $A^{(t)}$ and $B^{(t)}$.
  The proposed algorithm will thus compute the generalized eigenvalues
  of tridiagonal matrix pencils fast and accurately for large problems.

  \begin{table}[tbp]
    \caption{The results of the computation by Algorithm~\ref{alg:riigev}
    for the generalized eigenvalue problems of $(K_N+2I_N, K_N+I_N)$.}
    \label{tab:result-rii}
    \begin{center}
      \begin{tabular}{c|ccccc}
        \hline
        Problem size ($N$) & 512 & 1024 & 2048 & 4096 & 8192\\\hline
        Computation time [sec.] &  0.0958 & 0.392 & 1.58 & 6.24 & 24.6\\
        Maximum relative error &  $3.109\times 10^{-15}$ & $3.405\times 10^{-15}$ & $1.776\times 10^{-15}$ & $3.701 \times 10^{-15}$ & $2.043 \times 10^{-14}$\\
        Average relative error & $1.344\times 10^{-16}$ & $1.211 \times 10^{-16}$ & $1.154 \times 10^{-16}$ & $1.072 \times 10^{-16}$ & $1.129 \times 10^{-16}$\\\hline
      \end{tabular}
    \end{center}
    \caption{The results of the computation by \texttt{DSYGV} in LAPACK
    for the generalized eigenvalue problems of $(K_N+2I_N, K_N+I_N)$.}
    \label{tab:result-dsygv}
    \begin{center}
      \begin{tabular}{c|ccccc}
        \hline
        Problem size ($N$) & 512 & 1024 & 2048 & 4096 & 8192\\\hline
        Computation time [sec.] &  0.162 & 1.92 & 30.3 & 307 & 2400\\
        Maximum relative error & $3.664\times 10^{-15}$ & $6.815\times 10^{-15}$ & $1.304 \times 10^{-14}$ & $1.684 \times 10^{-14}$ & $2.949\times 10^{-14}$\\
        Average relative error & $6.673\times 10^{-16}$ & $8.469\times 10^{-16}$ & $1.035 \times 10^{-15}$ & $1.276 \times 10^{-15}$ & $1.508\times 10^{-15}$\\\hline
      \end{tabular}
    \end{center}
  \end{table}

  Tables~\ref{tab:result-rii} and \ref{tab:result-dsygv} show
  the results of the computation by the proposed algorithm and \texttt{DSYGV},
  respectively.
  The parameters for the proposed algorithm are $s^{(t)}=(N+2)/(N+1)$
  for all $t \ge 0$ and $\kappa_n=-10000$ for all $n \ge N-1$.
  In all the cases, the proposed algorithm is faster and more accurate
  than \texttt{DSYGV}.
  In particular, the proposed algorithm has an advantage in computation time
  for large problems.
  Remark that the techniques called deflation and splitting
  (if $|w^{(t)}_n|$ and $|\lambda_n w^{(t)}_n|$ become sufficiently small
  for some $n$ at a time $t$,
  then the problem can be deflated or split into two problems) were not
  implemented in the program used for the experiments.
  These techniques may further accelerate the proposed algorithm.
\end{example}

\section{Conclusion}
In this paper, we have studied the monic type \Rii chain in detail and
proposed a generalized eigenvalue algorithm for tridiagonal matrix pencils based
on a subtraction-free form of the monic type finite \Rii chain.
It has been shown that, similarly to the dqds algorithm,
the parameter $s^{(t)}$ in the monic type finite \Rii chain plays
the role of the origin shifts to accelerate convergence
and the proposed algorithm computes the generalized
eigenvalues of tridiagonal matrix pencils fast and accurately.

In Example~\ref{ex:large}, the shift parameter $s^{(t)}$ is chosen ideally
and all the conditions \eqref{eq:rii-positive-condition} are satisfied.
However, it is difficult to make this situation in general.
Further improvements are thus required for practical use.
First, in general, the condition for positivity
\eqref{eq:rii-positive-condition} is not sufficient for applications;
the condition does not provide concrete ways to choose the parameters for
general cases.
Second, for applying the proposed algorithm to general (not tridiagonal)
matrix pencils,
a preconditioning called simultaneous tridiagonalization
(see, e.g., \cite{garvey2003stt,sidje2011stt}) is required.
In addition to the improvements, comparisons with traditional methods
should be discussed.
These are left for future work.

\section*{Acknowledgments}
The authors would like to thank Professor Yoshimasa Nakamura and
Professor Alexei Zhedanov for valuable discussions and comments.
This work was supported by JSPS KAKENHI Grant Numbers 11J04105 and 22540224.



\end{document}